\newtheorem{theorem}{Theorem}[section]
\newtheorem{definition}[theorem]{Definition}
\numberwithin{equation}{section}
\newtheorem{lemma}[theorem]{Lemma}
\newtheorem{proposition}[theorem]{Proposition}
\newtheorem{corollary}[theorem]{Corollary}
\newtheorem{remark}[theorem]{Remark}
\newtheorem{claim}{Claim}[section]
\numberwithin{equation}{section}
\def\N{\mathbb{N}}
\def\Z{\mathbb{Z}}
\def\R{\mathbb{R}}
\def\AA{\mathcal{A}}
\renewcommand{\phi}{\varphi}
\renewcommand{\epsilon}{\varepsilon}
\newcommand{\1}{{\text{\Large $\mathfrak 1$}}}
\newcommand{\tmix}{t_{\mathrm{mix}}}
\newcommand{\tl}{t_{\mathrm{L}}}
\newcommand{\thit}{t_{\mathrm{H}}}
\newcommand{\tct}{t_{\mathrm{cts}}}
\newcommand{\tmov}{t_{\mathrm{mov}}}
\newcommand{\pr}[1]{\mathbb{P}\!\left(#1\right)}
\newcommand{\E}[1]{\mathbb{E}\!\left[#1\right]}
\newcommand{\estart}[2]{\mathbb{E}_{#2}\!\left[#1\right]}
\newcommand{\prstart}[2]{\mathbb{P}_{#2}\!\left(#1\right)}
\newcommand{\prcond}[3]{\mathbb{P}_{#3}\!\left(#1\;\middle\vert\;#2\right)}
\newcommand{\escond}[3]{\mathbb{E}_{#3}\!\left[#1\;\middle\vert\;#2\right]}
\newcommand{\vol}[1]{\mathrm{vol}\!\left(#1\right)}
\newcommand\be{\begin{equation}}
\newcommand\ee{\end{equation}}
\begin{document}

\title{\bf Mixing times and moving targets}

\author{
Perla Sousi\thanks{University of Cambridge, Cambridge, UK;   p.sousi@statslab.cam.ac.uk} \and 
Peter Winkler\thanks{Dartmouth College, Hanover, NH; peter.winkler@dartmouth.edu}
}
%\date{}
\maketitle

\begin{abstract}
%We consider irreducible Markov chains on a finite state space. We show that the mixing time of any such chain is equivalent to the maximum over initial states $x$ and moving large sets $(A_s)_s$ of the hitting time of $(A_s)_s$ starting from $x$. We prove that in the case of the $d$-dimensional torus the maximum hitting time of moving targets is equal to the maximum hitting time of stationary targets. Nevertheless,  we construct a transitive graph where these two quantities are not equal. This resolves an open question of Aldous and Fill on a ``cat and mouse'' game. 
We consider irreducible Markov chains on a finite state space.  We show that the mixing time of any such chain is equivalent to the maximum,
over initial states $x$ and moving large sets $(A_s)_s$, of the hitting time of $(A_s)_s$ starting from $x$.
We prove that in the case of the $d$-dimensional torus the maximum hitting time of moving targets is equal
to the maximum hitting time of stationary targets.  Nevertheless, we construct a transitive graph where these two quantities are not equal,
resolving an open question of Aldous and Fill on a ``cat and mouse'' game.
\newline
\newline
\emph{Keywords and phrases.} Markov chain, mixing time, hitting time, rearrangement inequality.
\newline
MSC 2010 \emph{subject classifications.}
Primary   60J10; Secondary 60J27, 60G40.
\end{abstract}

\section{Introduction}
\label{sec:intro}

%Mixing times and hitting times are fundamental notions for Markov chains. They are both widely studied and several techniques have been developed to analyze them. 
%We first review some notation and present previous work relating these two quantities.

Mixing times and hitting times are fundamental notions for finite-state Markov chains.  Both have been widely studied (see, e.g., \cite{AldFill}
or \cite{LevPerWil} for background and numerous references) and a great variety of techniques have been developed to analyze them.

We begin by fixing some notation and reviewing previous work relating these two quantities.

Let $(X_t)_{t \geq 0}$ be an irreducible Markov chain on a finite state space with transition matrix $P$ and stationary distribution $\pi$. For $x,y$ in the state space we write
\[
P^t(x,y)=\prstart{X_t=y}{x},
\]
for the transition probability in $t$ steps.

Let $\displaystyle d(t) = \max_{x}\|P^t(x,\cdot) - \pi \|$, where $\|\mu-\nu\|$ stands for the total variation distance between the two probability measures $\mu$ and $\nu$. The total variation mixing is defined as follows:
\[
\tmix(\epsilon)=\min\{t \geq 0: d(t) \leq \epsilon\}.
\]
We use the convention that $\tmix = \tmix(1/4)$.

Before stating our first theorem, we introduce the maximum hitting time of ``big'' sets. Let $\alpha<1/2$, then we define
\[
\thit(\alpha)=\max_{x,A: \pi(A)\geq \alpha} \estart{\tau_A}{x},
\]
where $\tau_A$ stands for the first hitting time of the set $A$. 

We say that two real-valued functions $f$ and $g$ are {\em equivalent}, denoted $f \asymp g$, if there are universal positive constants
$c$ and $c'$ such that $cf \le g \le c'f$.  If the constants are allowed to depend on a parameter $\alpha$, we write $f \asymp_\alpha g$.

Aldous (1981) related mixing and hitting times by proving that  $\tct \asymp \max_{x,A}\pi(A) \estart{\tau_A}{x}$ for all reversible chains, where $\tct$ is the mixing time of the continuous time chain. In two independent recent papers by Imbuzeiro Oliveira~\cite{Imbuzeiro} and Peres and Sousi~\cite{PeresSousi} it was proved that for all reversible chains, if $\alpha<1/2$, then
\begin{align}\label{eq:previous}
\tl \asymp_\alpha \thit(\alpha),
\end{align}
where $\tl$ is the mixing time of the lazy version of the chain, i.e.\ the chain with transition matrix $\frac{P+I}{2}$.

Very recently, Griffiths, Kang, Imbuzeiro Oliveira and Patel~\cite{SimonRoberto} showed that $\thit(\alpha)\leq \thit(1/2)/\alpha$ for all $\alpha<1/2$. Hence this together with~\eqref{eq:previous} or with Aldous' result implies that for all reversible chains if $\alpha\leq 1/2$, then
\[
\tl \asymp_{\alpha} \thit(\alpha),
\]
with the equivalence failing if $\alpha>1/2$.

For non-reversible chains equation~\eqref{eq:previous} may fail, e.g.\ for biased random walk on the cycle $\Z_n$ we have
$\tl \asymp n^2$, while $\thit(\alpha) \asymp n$, for any $\alpha>0$. During a lecture on~\cite{PeresSousi} by Yuval Peres, Guy Kindler proposed that for non-reversible chains the right analogue of~\eqref{eq:previous} involves moving targets. Our first result establishes this equivalence.

Let $\alpha \in (0,1)$ and $\AA(\alpha)$ denote the collection of sequences of sets defined as follows:
\[
\AA(\alpha) = \{ A = (A_t)_{t\geq 0}: \ \forall t\geq 0, \ \pi(A_t) \geq \alpha\}.
\]
For $A \in \AA(\alpha)$ define $\tau_A = \inf\{ t\geq 0: X_t \in A_t\}$ and 
\[
\tmov(\alpha) = \sup_{x,A \in \AA(\alpha)} \estart{\tau_A}{x}.
\]

\begin{theorem}\label{thm:movingtargets}
%Let $\alpha<1/2$.
%Then there exist positive constants $c_\alpha$ and $c_\alpha'$ such that for every chain 
%\[
%c_\alpha' \tmov(\alpha)\leq \tmix \leq c_\alpha \tmov(\alpha).
%\]
For $\alpha < 1/2$, $\tmix \asymp \tmov(\alpha)$.
\end{theorem}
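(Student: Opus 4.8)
The equivalence is two inequalities, $\tmov(\alpha)\lesssim\tmix$ and $\tmix\lesssim\tmov(\alpha)$; I expect the implied constants to depend on $\alpha$.

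\textit{The bound $\tmov(\alpha)\lesssim_\alpha\tmix$.} This direction is soft. Let $m=\tmix(\alpha/2)$. Submultiplicativity of $\bar d(t):=\max_{x,y}\|P^t(x,\cdot)-P^t(y,\cdot)\|$, together with $d\le\bar d\le 2d$, gives $m\le\lceil\log_2(2/\alpha)\rceil\,\tmix=:C_\alpha\tmix$. Fix a state $x$ and $A=(A_t)_t\in\AA(\alpha)$. Because $\|P^m(y,\cdot)-\pi\|\le\alpha/2$ for every $y$,
\[
P^m\!\left(y,A_{(k+1)m}\right)\ \ge\ \pi\!\left(A_{(k+1)m}\right)-\tfrac{\alpha}{2}\ \ge\ \tfrac{\alpha}{2}\qquad\text{for all }k\ge0\text{ and all }y,
\]
so wherever the chain sits at time $km$ it enters a target during $(km,(k+1)m]$ with probability at least $\alpha/2$, conditionally on $\mathcal{F}_{km}$. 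Iterating, $\prstart{\tau_A>km}{x}\le(1-\alpha/2)^k$, hence $\estart{\tau_A}{x}\le m\sum_{k\ge0}(1-\alpha/2)^k=2m/\alpha\le(2C_\alpha/\alpha)\tmix$; now take the supremum over $x$ and $A$.

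\textit{The bound $\tmix\lesssim_\alpha\tmov(\alpha)$.} Fix $\epsilon=(\alpha+\tfrac12)/2\in(\alpha,\tfrac12)$. Submultiplicativity again gives $\tmix\le C_\alpha\,\tmix(\epsilon)$, so with $T:=\tmix(\epsilon)$ it suffices to produce $x_0$ and $A\in\AA(\alpha)$ with $\estart{\tau_A}{x_0}\ge cT$. Since $t\mapsto\|P^t(x,\cdot)-\pi\|$ is non-increasing for each fixed $x$ (apply the contraction $\|\nu P-\pi\|\le\|\nu-\pi\|$ to $\nu=P^t(x,\cdot)$, using $\pi P=\pi$) and $d(T-1)>\epsilon$, there is a state $x_0$ with $\|\mu_t-\pi\|>\epsilon$ for every $t\le T-1$, where $\mu_t:=P^t(x_0,\cdot)$. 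Splitting the deficit $\sum_{y:\,\mu_t(y)<\pi(y)}(\pi(y)-\mu_t(y))$ at the threshold $\gamma\pi(y)$ shows that for such $t$ the ``light'' set $A_t:=\{y:\mu_t(y)<\gamma\pi(y)\}$, with any fixed $\gamma\in(1-\epsilon+\alpha,1)$, has $\pi(A_t)\ge\alpha$ while $\mu_t(A_t)<\gamma<1$; for $t\ge T$ take $A_t$ to be any set of stationary mass $\ge\alpha$.

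It remains to prove $\estart{\tau_A}{x_0}\ge cT$, i.e.\ that the walk from $x_0$ stays in the moving ``tube'' $(A_s^c)_s$ for a constant fraction of $T$ steps with probability bounded below. A plain union bound is useless here, since $\mu_t(A_t)$ is only bounded away from $1$, not small. Instead I would argue by blocks of length $L\asymp T$: if each block is survived with probability at least $c_2$ given the chain's state at its start, then $\prstart{\tau_A>kL}{x_0}\ge c_2^{\,k}$, and summing the geometric tail gives $\estart{\tau_A}{x_0}\ge c_2L/(1-c_2)\gtrsim_\alpha T\gtrsim_\alpha\tmix$; so it is enough to understand the conditional law $\mathcal{L}(X_{kL}\mid\tau_A>kL)$ and show a block is avoided with constant probability started from it. The plan for this is to build the sets $A_t$ recursively so that they also shadow the evolving conditional law, via the dichotomy: either that law is still bounded away from $\pi$, so a light set of stationary mass $\ge\alpha$ can be kept nearly disjoint from where the conditioned walk sits and the conditioning ``leaks'' little at this step; or it has come close to $\pi$, in which case $o(T)$ further steps would drive $d$ below $\epsilon$, contradicting $T=\tmix(\epsilon)$. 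Making this contradiction quantitative — bounding the per-step leakage so that survival of an entire block of length $\asymp T$ stays bounded below — is the crux, and I expect it to be the main obstacle.
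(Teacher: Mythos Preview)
Your upper bound $\tmov(\alpha)\lesssim_\alpha\tmix$ is correct and is exactly the paper's argument.

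For the lower bound you have a genuine gap, which you yourself flag: from the marginal bounds $\mu_t(A_t)<\gamma$ you cannot extract a lower bound on $\prstart{\tau_A>cT}{x_0}$, and the block/dichotomy sketch does not close. The difficulty is structural: your sets $A_t$ are defined from the \emph{unconditional} law $\mu_t$, and there is no mechanism tying the conditional law $\mathcal L(X_t\mid\tau_A>t)$ back to $\mu_t$ or to $\pi$. The ``either it is far from $\pi$ or it is close'' dichotomy does not lead anywhere, because being close to $\pi$ at some intermediate conditional step says nothing about $d(\cdot)$, which concerns unconditional laws from worst starting points.

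The paper's device avoids this entirely by working \emph{backwards} from a single terminal bad set. Take $t<\tmix(\alpha+\epsilon)$ and pick $x,A$ with $P^t(x,A)<\pi(A)-(\alpha+\epsilon)$; in particular $\pi(A)>\alpha+\epsilon$. Now define the moving targets by
\[
B_s=\{y:\ P^{t-s}(y,A)>\pi(A)-\alpha\}\quad(s<t),\qquad B_s=\Omega\quad(s\ge t).
\]
Stationarity gives $\pi(A)\le\pi(B_s)+(\pi(A)-\alpha)$, hence $\pi(B_s)\ge\alpha$, so $(B_s)\in\AA(\alpha)$. The point is that hitting $(B_s)$ \emph{forces} a good chance of landing in $A$ at time $t$: if $\estart{\tau_B}{x}\le\theta t$ then $\prstart{\tau_B\le t}{x}\ge 1-\theta$, and by the strong Markov property
\[
P^t(x,A)\ \ge\ (1-\theta)\inf_{s\le t}\inf_{w\in B_s}P^{t-s}(w,A)\ \ge\ (1-\theta)\bigl(\pi(A)-\alpha\bigr),
\]
which for $\theta$ small enough exceeds $\pi(A)-(\alpha+\epsilon)$, contradicting the choice of $x,A$. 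Thus $\estart{\tau_B}{x}\ge\theta t$ and $\tmov(\alpha)\gtrsim_\alpha\tmix$. The missing idea is precisely this: define the moving sets via the transition kernel to a fixed terminal witness set, not via the marginal law of the chain; then the hitting time of the moving sets translates \emph{deterministically}, through the strong Markov property, into a lower bound on $P^t(x,A)$.
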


We will prove Theorem~\ref{thm:movingtargets} in Section~\ref{sec:movingsets}.

\begin{remark}\rm{
We note that Theorem~\ref{thm:movingtargets} does not require the chain to be either lazy or reversible, as is the case for~\eqref{eq:previous}. In this setting the equivalence holds for any chain.
}
\end{remark}

Theorem~\ref{thm:movingtargets} and~\eqref{eq:previous} immediately give that for all reversible lazy chains and for any $\alpha<1/2$ 
\[
\tmov(\alpha) \asymp_\alpha \thit(\alpha).
\]
If the chain is not reversible, though, the above equivalence can fail. For instance, for the biased random walk on $\Z_n$,
if $A = (A_i)_i$ are sets moving at the same speed as the random walk, then $\E{\tau_A} \asymp n^2$ agreeing with the mixing time $\tl$.

%The question that arises now is whether these two quantities are exactly equal in the case of a lazy reversible chain, i.e.\ the constants are exactly equal to $1$.
%It turns out that the answer is positive in the case of the $d$-dimensional torus, for all $d\geq 1$ and this is the content of the following proposition. 

We next consider the problem of colliding with a moving target on a graph. In the following theorem we show that in the case of toroidal grids, the best strategy for the target, to avoid collision as long as possible,
is to stay in place at the maximum distance from the starting point.
As a corollary, we show that in the 1-dimensional case the two quantities $\thit$ and $\tmov$ are equal.

\begin{theorem}\label{thm:randomwalkzn}
Let $X$ be a lazy simple random walk on $\Z_n^d$ and $f:\N \to \Z_n^d$ a function. Then setting $a = \left(\lfloor n/2 \rfloor,\ldots,\lfloor n/2 \rfloor\right)$ we have for all $t$
\[
\prstart{X_1 \neq f(1), \ldots, X_t \neq f(t)}{0} \leq \prstart{X_1 \neq a, \ldots, X_t \neq a}{0}.
\]
\end{theorem}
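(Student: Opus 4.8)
The plan is to reformulate the inequality, reduce it to one dimension using the product structure of the lazy walk, and then settle the one-dimensional problem by a rearrangement argument. Write $q$ for the one-step kernel of the lazy simple random walk on $\Z_n$ and observe that the lazy walk on $\Z_n^d$ equals $\frac1d\sum_{j=1}^d N_j$, where $N_j$ performs one step of $q$ in coordinate $j$ and leaves the other coordinates fixed; equivalently, at each step one selects a uniform coordinate and takes a lazy step of simple random walk on $\Z_n$ in it. Consequently, conditionally on the sequence of selected coordinates, the $d$ coordinate processes are independent lazy walks on $\Z_n$ from $0$, each run on its own clock. The statement to prove is that, with $a=(\lfloor n/2\rfloor,\dots,\lfloor n/2\rfloor)$, the survival probability $\prstart{X_s\neq f(s),\,1\le s\le t}{0}$ is maximised over $f$ by the constant target $a$.

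\emph{Reduction to $d=1$.} I would freeze the coordinates of $f$ one at a time. Fix $k$; condition on the sequence of selected coordinates and on the trajectories of all coordinates $j\neq k$, and let $G$ be the set (determined by this conditioning) of times $s\le t$ at which all coordinates $j\neq k$ already agree with $f_j(s)$. On this conditioning the survival event against $f$ coincides with the event that coordinate $k$'s lazy walk on $\Z_n$ avoids, at each of its own clock-times $\ell$, the finite set $F_\ell=\{f_k(s):s\in G,\ \ell(s)=\ell\}$, where $\ell(s)$ is the local clock of coordinate $k$ at time $s$. I would then invoke a one-dimensional lemma: for a lazy walk $Y$ on $\Z_n$ from $0$ and any forbidden sets $(F_\ell)$, replacing every nonempty $F_\ell$ by the single point $\lfloor n/2\rfloor$ does not decrease $\prstart{Y_\ell\notin F_\ell\ \forall\ell}{0}$. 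Applied conditionally and then averaged, this shows that freezing coordinate $k$ at $\lfloor n/2\rfloor$ weakly increases the survival probability against $f$; iterating over $k=1,\dots,d$ turns $f$ into the constant target $a$, proving the theorem.

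\emph{The one-dimensional lemma.} Since enlarging a forbidden set can only decrease survival, it suffices to treat single-point targets, i.e.\ to show that for a lazy walk $Y$ on $\Z_n$ from $0$ and any $g$ defined on a finite set of times $L$, $\prstart{Y_\ell\neq g(\ell)\ \forall\ell\in L}{0}\le\prstart{Y_\ell\neq\lfloor n/2\rfloor\ \forall\ell\in L}{0}$. I would prove this by induction on $|L|$, carrying the sub-probability law $\mu$ of the walk that has so far avoided the moving target and comparing it with the law $\mu^\star$ of the walk that has so far avoided the constant target $\lfloor n/2\rfloor$. The invariant to maintain is that $\mu$ is ``less peaked about $0$'' than $\mu^\star$ --- concretely $\mu(B_r)\le\mu^\star(B_r)$ for every arc $B_r$ centred at $0$ --- while $\mu^\star$ is symmetric and unimodal about $0$. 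One step of the process consists of applying $q$ and then deleting the mass at the current target point. Applying $q$ preserves the invariant: $q$ is symmetric and unimodal, so $x\mapsto q(x,B_r)$ is nonincreasing in the distance from $0$, and hence $(\mu q)(B_r)=\sum_x\mu(x)\,q(x,B_r)$ is controlled in terms of the arc-masses of $\mu$ by a summation by parts --- this is the rearrangement inequality at the core of the proof. The deletion step is handled by comparing the deleted amount with the mass deleted at $\lfloor n/2\rfloor$ in $\mu^\star$, using that in a symmetric unimodal measure the farthest vertex $\lfloor n/2\rfloor$ carries the least mass and that $q$ moves by at most one vertex; taking $B_r$ to be all of $\Z_n$ recovers the total-mass, that is survival-probability, comparison.

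\emph{Main obstacle.} The delicate point is closing this last step. The crude bound ``deleting at the farthest vertex is cheapest'' fails once the target's earlier moves have left $\mu$ with a ``valley'' away from $\lfloor n/2\rfloor$, so the real work is to choose the majorization-type order on sub-probability measures strong enough to be simultaneously preserved by the $q$-step, preserved by the deletion step, and sufficient to yield the total-mass comparison. A secondary technical point is the parity of $n$: for odd $n$ the vertex $\lfloor n/2\rfloor$ is only one of two vertices at maximal distance from $0$, so $\mu^\star$ is no longer symmetric and the symmetric-unimodal bookkeeping above needs a separate treatment.
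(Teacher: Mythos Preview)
Your reduction to $d=1$ via conditioning on the coordinate-selection process is sound, and close in spirit to what the paper does (the paper also proceeds coordinate by coordinate, though by applying single-coordinate reflections directly on $\Z_n^d$ rather than by conditioning). The substantive gap is exactly where you locate it: the one-dimensional lemma. The arc-mass invariant $\mu(B_r)\le\mu^\star(B_r)$ is genuinely \emph{not} preserved by the deletion step, and no mild strengthening of it will be. The reason is that deletion interacts with the \emph{pointwise} profile of $\mu$, which arc-masses do not control. For $n$ even, suppose after a $q$-step the invariant holds with equal total mass, but $\mu$ carries most of its mass near $n/2$ while $\mu^\star$ is nearly uniform; this is entirely consistent with $\mu(B_r)\le\mu^\star(B_r)$ for all $r$. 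If the current target $g$ lies near $0$, deleting $g$ from $\mu$ costs almost nothing while deleting $n/2$ from $\mu^\star$ costs about $1/n$, so the total-mass comparison (the case $B_r=\Z_n$, which is exactly the survival probability you want) reverses. Any order strong enough to survive the deletion would have to encode pointwise information about $\mu$, and there is no natural such order that is also preserved by convolution with $q$.

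The paper sidesteps this by \emph{polarization} (two-point rearrangement) instead of tracking a running invariant. The key lemma is: for any reflection $\sigma$ of $\Z_n$, replacing each $D_i$ by its two-point rearrangement $D_i^\sigma$ and the start $\{b\}$ by $\{b\}^\sigma$ can only increase $\prstart{X_1\in D_1,\dots,X_t\in D_t}{b}$. This is proved by writing the path-sum, splitting each vertex into its $H^+$-representative and a sign, and applying a two-point combinatorial inequality of Burchard--Schmuckenschl\"ager; laziness enters precisely to ensure $p(x^+,y^+)\ge p(x^-,y^+)$ for $x,y\in H^+$. With $D_i=\Z_n\setminus\{f(i)\}$ one then chooses, for each $i$ in turn, a reflection $\sigma_i$ with $0\in H^+\cup H^0$, $a=\lfloor n/2\rfloor\in H^-\cup H^0$, and $\sigma_i f(i)=a$; then $\{0\}^{\sigma_i}=\{0\}$, $(\Z_n\setminus\{a\})^{\sigma_i}=\Z_n\setminus\{a\}$, and $D_i^{\sigma_i}=\Z_n\setminus\{a\}$. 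Each application of the lemma thus moves one more target to $a$ without disturbing $0$ or the targets already at $a$, and after at most $t$ reflections all targets sit at $a$. This handles both parities of $n$ uniformly and is the device that replaces your missing invariant.
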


\begin{remark}
\rm{
Note that if the random walk $X$ on $\Z_n^d$ is not lazy, then one can always choose a function $f:\N\to \Z_n^d$ so that 
\[
\prstart{X_1\neq f(1),\ldots, X_t \neq f(t)}{0} =1,
\]
and hence the conclusion of Theorem~\ref{thm:randomwalkzn} fails.
}
\end{remark}

\begin{corollary}\label{pro:torus}
Let $X$ be a lazy simple random walk on $\Z_n= \{0,1,\ldots,n-1\}$. Then for all $n,\alpha$ we have
\[
\thit(\alpha) = \tmov(\alpha).
\]
\end{corollary}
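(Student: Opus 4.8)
\textbf{Proof plan for Corollary~\ref{pro:torus}.}

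I would start with the trivial direction: $\thit(\alpha)\le\tmov(\alpha)$, since any stationary set $A$ with $\pi(A)\ge\alpha$ gives the constant sequence $A_t\equiv A\in\AA(\alpha)$ with the same hitting time. For the reverse, since $\pi$ is uniform, $\pi(A_t)\ge\alpha$ is equivalent to $|A_t|\ge k:=\lceil\alpha n\rceil$; by vertex-transitivity I may start the walk at $0$, and I may assume $0\notin A_0$ (otherwise $\tau_A=0$). Writing $\estart{\tau_A}{0}=\sum_{t\ge0}\prstart{X_0\notin A_0,\dots,X_t\notin A_t}{0}$, the goal reduces to producing a single set $B$ with $|B|=k$ and $0\notin B$ such that, for all $t\ge0$ and all $(A_t)\in\AA(\alpha)$,
\[
\prstart{X_0\notin A_0,\dots,X_t\notin A_t}{0}\ \le\ \prstart{X_0\notin B,\dots,X_t\notin B}{0};
\]
summing over $t$ then gives $\estart{\tau_A}{0}\le\estart{\tau_B}{0}\le\thit(\alpha)$.

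Next I would pin down $B$ by evaluating $\thit(\alpha)$. For a stationary set $A$ with $|A|=k$, the complement $\Z_n\setminus A$ is a disjoint union of arcs of total length $n-k$, and the walk started outside $A$ remains, up to $\tau_A$, in the single arc containing its start. Since the exit time of a lazy walk from a path of $m$ vertices is concave in the starting point (hence maximized at the midpoint) and increasing in $m$, a short comparison shows $\estart{\tau_A}{x}$ is largest when $A$ is a single arc of length $k$ and $x$ is the point of $\Z_n\setminus A$ farthest from $A$. Translating that point to $0$, I get $\thit(\alpha)=\estart{\tau_B}{0}$ with $B$ the arc of $k$ consecutive vertices centred (up to parity) at $\lfloor n/2\rfloor$ — which is exactly the target $a$ of Theorem~\ref{thm:randomwalkzn} when $k=1$.

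It then remains to prove the displayed inequality for this $B$. For $k=1$ it is the $d=1$ case of Theorem~\ref{thm:randomwalkzn}. For general $k$ the plan is two rearrangement steps. \emph{Clustering}: replacing each $A_t$ by an arc of length $k$ does not decrease $\prstart{X_s\notin A_s,\ s\le t}{0}$. The one-step case — that $\sum_{y\in C}P^s(0,y)$ over sets $C$ of fixed size is maximized by an arc centred at $0$ — is immediate from unimodality of the lazy kernel $P^s(0,\cdot)$ on $\Z_n$; the trajectory version I would obtain by a discrete rearrangement run inductively over the time steps, at each step pushing the allowed set $\Z_n\setminus A_s$ toward $\{-\lfloor n/2\rfloor,\dots,\lfloor n/2\rfloor\}$. \emph{Centring}: among sequences of length-$k$ arcs, the constant antipodal arc $B$ is optimal; passing to the deterministically shifted displacement $Z_s:=X_s-c_s$, with $c_s$ an endpoint of the window $\Z_n\setminus A_s$, turns the survival event into ``$Z_s\in\{0,\dots,n-k-1\}$ for $s\le t$'' for a lazy walk with a prescribed deterministic drift, and zero drift starting at the centre of the window maximizes this — provable by the same reflection/rearrangement method as Theorem~\ref{thm:randomwalkzn}, since any drift only hastens the exit. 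Together with the identification of $\thit(\alpha)$ above, this yields $\tmov(\alpha)\le\thit(\alpha)$.

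The main obstacle will be the clustering step for the whole-trajectory event: marginal unimodality of the kernels $P^s(0,\cdot)$ alone does not suffice, and one has to carry a suitable unimodality/symmetry invariant through the induction on $t$, the bookkeeping becoming delicate exactly when $\Z_n\setminus A_s$ breaks into several arcs that must be coalesced. Once the targets are arcs, the centring step is essentially Theorem~\ref{thm:randomwalkzn} with ``a point'' replaced by ``a window'', and I expect it to go through with the same argument.
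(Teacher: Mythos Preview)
Your plan is essentially correct and, in fact, considerably more detailed than the paper's own proof, which is a two-line sketch: on $\Z_n$ the hardest stationary set of a given size is an arc; hitting an arc is the same as hitting its two endpoints, which can be \emph{glued to a single vertex}, so one is reduced to hitting a (possibly moving) single point on the shorter cycle $\Z_{n-k+1}$, and Theorem~\ref{thm:randomwalkzn} on that smaller cycle says the fixed antipodal point is optimal. In other words, the paper replaces your ``centring'' step by a collapse of the arc to a point and an appeal to Theorem~\ref{thm:randomwalkzn} verbatim on $\Z_{n-k+1}$, rather than re-running a rearrangement/drift argument with a window.

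Two remarks on your route. First, your clustering and centring steps need not be separated: the polarization machinery (Lemma~\ref{lem:startata}), applied exactly as in the proof of Theorem~\ref{thm:randomwalkzn} but with the complements $D_s=\Z_n\setminus A_s$ of size $n-k$ in place of the size-$(n{-}1)$ complements there, will, by a finite sequence of reflections each fixing $\{0\}$ and the centred arc, push every $D_s$ to the \emph{same} arc of length $n-k$ centred at $0$; this already makes each $A_s$ equal to the fixed antipodal interval $B$, so no drifted-walk argument is needed. Second, the paper's gluing shortcut is elegant but, read literally for \emph{moving} arcs, needs care: for a moving interval the walk can land in the interior without first touching an endpoint, so ``hitting the arc $=$ hitting its endpoints'' is not a pathwise identity when the arc moves. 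Your polarization approach on $\Z_n$ sidesteps this issue and is arguably the cleaner way to make the corollary rigorous; the ``main obstacle'' you flag is handled precisely by iterating Lemma~\ref{lem:startata}.
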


We prove Theorem~\ref{thm:randomwalkzn} and  Corollary~\ref{pro:torus} in Section~\ref{sec:torus} using a discrete version of rearrangement inequalities. We employ a polarization technique which has been used extensively in the continuous setting to prove several classical rearrangement inequalities (see, for instance,~\cite{BurSch}). As a by-product of the discrete rearrangement inequality, we also prove that the expected volume of the ``sausage'' around a discrete lazy simple random walk on $\Z^d$ with drift is minimized when the drift is equal to $0$.

%A more general isoperimetric inequality for the expected volume of the Wiener sausage has been proved in~\cite{PerSous11}. Due to the lack of symmetries of $\Z^d$ such a general result cannot be true in this case.

\begin{proposition}\label{pro:sausage}
Let $X$ be a lazy simple random walk on $\Z^d$ and let $f:\N \to \Z^d$ be a function. Then for all $t\in \N$ and all $n\in \N$
\[
\E{\vol{\bigcup_{s=0}^{t}(X_s + f(s) + Q_n)}} \geq \E{\vol{\bigcup_{s=0}^{t}(X_s +  Q_n)}},
\]
where $Q_n = [-n,n]^d$.
\end{proposition}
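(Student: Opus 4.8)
The plan is to derive Proposition~\ref{pro:sausage} from Theorem~\ref{thm:randomwalkzn}, or rather from the discrete rearrangement machinery used to prove it, transported from the torus $\Z_n^d$ to $\Z^d$. First I would rewrite the expected volume of the sausage as a sum over lattice points. For a fixed realization of the walk,
\[
\vol{\bigcup_{s=0}^{t}(X_s + f(s) + Q_n)} = \sum_{z \in \Z^d} \1\!\left\{ z \in \bigcup_{s=0}^t (X_s + f(s) + Q_n)\right\} = \sum_{z\in\Z^d}\left(1 - \prod_{s=0}^t \1\{z \notin X_s + f(s) + Q_n\}\right),
\]
and taking expectations and using $z \in X_s + f(s) + Q_n \iff X_s \in z - f(s) - Q_n$ (note $Q_n$ is symmetric) gives
\[
\E{\vol{\bigcup_{s=0}^{t}(X_s + f(s) + Q_n)}} = \sum_{z\in\Z^d}\left(1 - \prstart{X_s \notin z - f(s) - Q_n \ \forall s \le t}{0}\right).
\]
So it suffices to show that for each fixed $z$ the survival probability $\prstart{X_s \notin z - f(s) - Q_n \ \forall\, 0\le s\le t}{0}$ is \emph{maximized} when $f \equiv 0$, uniformly in $z$; summing the reversed inequalities over $z$ then yields the claim. (The $s=0$ term is harmless: if $0 \in z - f(0) - Q_n$ for some choice it is for $f(0)=0$ too only when $z \in Q_n$, so one should be slightly careful and perhaps start the walk so that $X_0$ is counted consistently; in fact replacing $\bigcup_{s=0}^t$ by the same index set on both sides this is automatic.)

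The next step is to prove the pointwise statement: for any target positions, moving a box-shaped obstacle set $B_s := z - f(s) - Q_n$ around makes it easier to avoid than keeping it fixed at $B_s \equiv z - Q_n$. This is exactly the ``moving target is no worse than stationary'' phenomenon of Theorem~\ref{thm:randomwalkzn}, except (a) we are on $\Z^d$ rather than $\Z_n^d$, (b) the target is a translate of a fixed box $Q_n$ rather than a single point, and (c) the optimal stationary position here is whatever $z$ happens to give, not the antipode. To handle (a) I would work on a large torus $\Z_N^d$ with $N \gg t, n$ and note that a lazy simple random walk on $\Z^d$ run for time $t$ and the corresponding walk on $\Z_N^d$ have identical laws for the relevant events (the walk cannot wrap around), so a torus inequality transfers to $\Z^d$. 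To handle (b) and (c) I would re-examine the polarization argument behind Theorem~\ref{thm:randomwalkzn}: the polarization / reflection operations act on the target configuration one coordinate-hyperplane reflection at a time, and each such step can only increase the survival probability; the same sequence of reflections that moves a single point toward the antipode will move a box-translate parameter $f(s)$ toward $0$ (again using the symmetry $Q_n = -Q_n$, so a reflected box is a translate of $Q_n$ with the translation parameter reflected). Concretely, I expect the key lemma to be: if $(B_s)_s$ and $(B_s')_s$ are obtained from one another by a single polarization, then $\prstart{X_s \notin B_s\ \forall s}{0} \le \prstart{X_s \notin B_s'\ \forall s}{0}$ when $B_s'$ is the ``more centered'' configuration; iterating drives $f(s)$ to $0$.

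The main obstacle is making the polarization step work at the level of \emph{sets} (box-translates) rather than points, and in the presence of the fixed additive shift $z$. For a single point target the reflection symmetry of the lazy walk on $\Z_n^d$ is clean; for a box one must check that the event $\{X_s \notin z - f(s) - Q_n\ \forall s\}$ transforms correctly under the coupling that reflects the walk across a hyperplane, and that after reflection the obstacle is still of the form (shift) $-$ (translate of $Q_n$) $-$ $Q_n$ absorbed appropriately. I would set this up by writing the obstacle as $z - f(s) + Q_n$ and tracking how a reflection $\sigma$ of $\Z_n^d$ across a coordinate hyperplane through $0$ (or through the half-integer level) acts: $\sigma(z - f(s) + Q_n) = \sigma(z) - \sigma(f(s)) + Q_n$ since $\sigma Q_n = Q_n$, so the reflected configuration has shift $\sigma(z)$ and translation $\sigma(f(s))$ — the box structure is preserved, only the parameters move, exactly as for points. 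Then the proof of Theorem~\ref{thm:randomwalkzn} applies essentially verbatim, the monotone quantity being the survival probability, and the polarization sequence can be chosen to send every $f(s)$ to $0$ (the optimum) while $z$ simply rides along; summing over $z\in\Z^d$ finishes the proof. A secondary, more bookkeeping-level obstacle is confirming convergence/finiteness of the sum over $z$ — but only finitely many $z$ contribute since $X_s$ stays in a bounded region up to time $t$, so this is not a real difficulty.
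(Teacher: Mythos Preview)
Your reduction of the expected volume to a sum of avoidance probabilities is the right first move, and polarization is the right tool. But the pointwise claim --- that for each fixed $z$ the survival probability $\prstart{X_s \notin z - f(s) + Q_n \ \forall s \le t}{0}$ is maximized at $f \equiv 0$ --- is false. Take $d = 1$, $n = 0$ (so $Q_0=\{0\}$), $t = 1$, $f(0) = 0$, $f(1) = -1$, and $z = 1$: with $f \equiv 0$ the obstacle is $\{1\}$ at both times and survival equals $\prstart{X_1 \neq 1}{0} = 3/4$, whereas with the given $f$ the time-$1$ obstacle is $\{2\}$ and survival equals $1$. The polarization step behind Theorem~\ref{thm:randomwalkzn} pushes the obstacle toward the \emph{antipode of the starting point}, not toward an externally prescribed centre $z$; iterating it has no reason to drive $f(s)$ to $0$ while keeping the walk pinned at $0$ and $z$ fixed. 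So the ``it suffices'' sentence does not hold, and the rest of the outline, which is built on establishing that pointwise maximization, cannot close.

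What the paper does instead is absorb the sum over $z$ into the path sum \emph{before} applying any rearrangement. After the change of variables $y_i \mapsto z - y_i$ (using translation- and reflection-invariance of the lazy kernel), the expected volume becomes
\[
\sum_{x_0,x_1,\ldots,x_t} \prod_{i=1}^{t} p(x_{i-1},x_i)\left(1 - \prod_{i=0}^{t}\1\bigl(x_i \notin f(i) + Q_n\bigr)\right),
\]
where now $x_0$ ranges freely over $\Z^d$ rather than being pinned at $0$. With the starting point free, a single reflection $\sigma$ acts on the whole path, start included, and Lemma~\ref{lem:almut} yields the inequality for the full sum directly on $\Z^d$ (this is Lemma~\ref{lem:prelimvol}; no detour through a large torus is needed). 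A sequence of at most $td$ coordinate reflections then centres every box $f(s)+Q_n$ at the origin. The moral: the polarization inequality is true only after integrating out the start, not pointwise in $z$.
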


A more general isoperimetric inequality for the expected volume of the Wiener sausage has been proved in~\cite{PerSous11}; the stronger Proposition~\ref{pro:sausage} makes use of the symmetries of $\Z^d$ and does not hold in general.

Finally, in the last theorem, we show that that the equality of Proposition~\ref{pro:torus} is not always true for a reversible Markov chain.
This resolves an open question of Aldous~\cite[Chapter~4, Open Problem~20]{AldFill} and of Imbuzeiro Oliveira~\cite{Oliveira}.

We say that $X$ is a continuous time random walk on a graph if it stays at every vertex for an exponential amount of time of mean $1$,
and then jumps to one of the neighbours uniformly at random. 

\begin{theorem}\label{thm:example}
There exists a transitive graph $G=(V,E)$ such that if $X$ is a continuous time or lazy random walk on $G$, then 
\[
\max_{x,y}\estart{\tau_y}{x} < \sup_{x,f \in V^{\R_+}}\estart{\tau_f}{x},
\]
where $\tau_f = \inf\{t\geq 0: X_t = f(t)\}$.
\end{theorem}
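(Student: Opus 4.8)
The plan is to exhibit an explicit transitive graph on which a ``cat and mouse'' strategy strictly beats the worst stationary target. The natural candidate is a Cayley graph of a group that is ``large and expander-like in most directions but has one long cheap cycle,'' so that a moving target can exploit a direction of motion that a stationary target cannot. Concretely, I would take $G$ to be the Cayley graph of $\Z_n \times H$ for a suitable bounded-degree expander-type graph $H$ on $m$ vertices (or, even more simply, a ``twisted'' construction such as a cycle $\Z_n$ with a complete graph $K_m$ glued at every vertex, made transitive by taking $\Z_n \times K_m$ with the appropriate generating set). On such a product the stationary hitting time $\max_{x,y}\estart{\tau_y}{x}$ is of order $\max(n^2, n\cdot m)$ up to constants coming from the fact that to hit a fixed vertex you must both traverse the cycle (cost $\asymp n^2$ diffusively) and land on the right $H$-coordinate. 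By contrast, a moving target that drifts steadily around the $\Z_n$ factor at unit speed forces the walker to ``catch up'' ballistically on the cycle, which for a diffusive walk on $\Z_n$ costs $\asymp n^2$ just as in the biased-walk example from the introduction, \emph{and} simultaneously the walker must match the $H$-coordinate, and these constraints interact so as to push $\estart{\tau_f}{x}$ strictly above the stationary bound. The point is to choose $n$ and $m$ so that the two bounds, which agree up to universal constants in general, can be separated by a constant factor in this example.

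The key steps, in order, are: (i) fix the graph $G = \mathrm{Cay}(\Z_n\times H, S)$ and check transitivity and that $X$ is the continuous-time (or lazy) random walk with uniform stationary measure; (ii) upper-bound $\max_{x,y}\estart{\tau_y}{x}$ by the standard commute-time / effective-resistance estimate, or directly by decomposing the hitting time into ``fix the cycle coordinate'' and ``fix the $H$ coordinate'' phases and using the diffusive $\asymp n^2$ bound on the cycle together with the rapid mixing of $H$; (iii) construct the moving target $f(t)$ explicitly --- e.g. $f(t) = (\lfloor ct\rfloor \bmod n,\ \text{something})$ for a well-chosen speed $c$ and $H$-component --- and lower-bound $\estart{\tau_f}{x}$ by showing that, conditioned on not having been caught, the walker's displacement along the cycle relative to the target performs a biased random walk that must cover distance $\asymp n$ against the drift, giving $\estart{\tau_f}{x} \ge (1+\delta)\max_{x,y}\estart{\tau_y}{x}$ for some absolute $\delta>0$ once the parameters are tuned; (iv) verify that continuous-time and lazy versions give the same conclusion by the usual comparison between the two (they differ by bounded time-change factors).

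The main obstacle will be step~(iii): getting a \emph{strict} inequality, with an honest constant, rather than merely $\estart{\tau_f}{x} \gtrsim \max_{x,y}\estart{\tau_y}{x}$. General equivalences give two-sided bounds with \emph{universal} constants, so to separate the two quantities I need quite precise control of both sides --- a sharp (not just order-of-magnitude) upper bound on the stationary hitting time, and a sharp lower bound on the moving-target hitting time for the specific $f$ I construct. This is where the choice of $H$ matters: I want $H$ rapidly mixing enough that the $H$-coordinate contributes a \emph{negligible} additive term to the stationary hitting time (so that $\max_{x,y}\estart{\tau_y}{x}$ is essentially the pure-cycle value, known exactly for $\Z_n$), while simultaneously the moving target can use the freedom in its $H$-component to ensure that ``catching up on the cycle'' and ``matching on $H$'' cannot be done for free in parallel. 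A clean way to force this is to let $f$'s $H$-coordinate be a bijective function of its (advancing) cycle coordinate, so that at the unique cycle-meeting time the walker must \emph{also} be at a prescribed, effectively random $H$-vertex; a union/first-passage decomposition then shows the walker typically misses and must loop around again, inflating the expectation by a genuine multiplicative factor. Making ``effectively random'' precise --- decorrelating the walker's cycle position from its $H$-position at the relevant times --- is the delicate part, and I expect it to rely on a coupling/independence argument exploiting the product structure together with the mixing of $H$.
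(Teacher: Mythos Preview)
Your plan takes a very different route from the paper, and I believe it has a genuine gap at step~(iii).

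The paper does \emph{not} argue asymptotically. It writes down a single small explicit transitive graph ($G_{2,12}$ for the lazy case, $G_{7,20}$ for continuous time): $m$ clusters of $n^2$ vertices arranged on a cycle $\Z_m$, with complete bipartite ``short'' edges between adjacent clusters and carefully chosen ``long'' shortcut edges jumping $m/4$ around the cycle. By solving a linear system it pins down the exact maximizing pair, say $x=0(0,0)$ and $y=6(1,1)$. The crucial structural feature is that $6(1,1)$ is reachable from $0(0,0)$ in two steps via long edges, while some other vertex --- $5(1,1)$ --- is \emph{not} reachable in two steps. The mouse's strategy is then trivial: sit at $5(1,1)$ for two steps, then jump to $6(1,1)$ and stay. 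On the event $\{\tau_{6(1,1)}\le 2\}$, which has positive probability, the moving target strictly outlasts the stationary one; on the complement the two hitting times coincide. No asymptotics, no sharp-constant battle.

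Your product-of-a-cycle-with-an-expander construction, by contrast, runs into this paper's own Theorem~\ref{thm:randomwalkzn} / Corollary~\ref{pro:torus}: for a lazy walk on $\Z_n$, a moving single-vertex target is \emph{never} harder to hit than the antipodal stationary one. In a Cartesian-type product $\Z_n\times H$, the $\Z_n$-coordinate of the walk is itself a lazy walk on $\Z_n$, so letting the target drift around the cycle buys nothing at the level of cycle meetings. And if $H$ mixes rapidly, then at each cycle-meeting time the $H$-coordinate of the walk is essentially uniform, so the chance of a simultaneous match is $\approx 1/|H|$ regardless of whether the target's $H$-coordinate is fixed or is some bijective function of its cycle position. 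Thus both the ``catch up on the cycle'' and the ``match on $H$'' contributions are, to leading order, the same for moving and stationary targets, and the decorrelation argument you sketch gives $\asymp$, not $>$. The obstacle you flag --- turning an order-of-magnitude comparison into a strict inequality --- is not a technicality here; on this class of graphs the two quantities really do seem to coincide.

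What makes the paper's graph work is precisely that it is \emph{not} product-like: the long edges let the cat reach the worst stationary target in very few steps with positive probability, while leaving some nearby vertex unreachable in that time. That asymmetry is what a product with a cycle factor cannot provide. If you want to salvage your approach, you would need a transitive graph where the projection to the ``long'' direction is genuinely not a lazy walk on a cycle --- i.e., you must break the product structure in a way that your lower-bound heuristic (biased relative walk) still applies but Corollary~\ref{pro:torus} does not.
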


In~\cite{AldFill} and~\cite{Oliveira} this was stated as a cat and mouse problem and it was conjectured that the best strategy
for the mouse to maximize the expected capture time is to stay in place. In our graph $G$ we show that this is not the case.
We prove Theorem~\ref{thm:example} in Section~\ref{sec:example}.

\section{Moving targets}
\label{sec:movingsets}

\begin{proof}[{\bf Proof of Theorem~\ref{thm:movingtargets}}]
We first show that $\tmov \leq c_1 \tmix$, where $c_1$ is a positive constant.

Let $t = \tmix(\alpha/2) \leq \lceil\log_2(1/\alpha)\rceil\tmix$. Then for all $x$ and all sets $A$ we have
\[
P^{t}(x,A) \geq \pi(A) - \frac{\alpha}{2}.
\]
Take a sequence of sets $A=(A_s) \in \AA(\alpha)$. Then for all $s$ and all starting points $x$ we have
\begin{align}\label{eq:allstarting}
P^t(x,A_s) \geq \frac{\alpha}{2}.
\end{align}
If $\tau= \min\{ k\geq 0: X_{kt} \in A_{kt}\}$, then obviously we have $\tau_A \leq t \tau$.
By~\eqref{eq:allstarting}, it follows that $\tau$ is stochastically dominated by a geometric random variable of success probability $\alpha/2$. Therefore,
\[
\estart{\tau_A}{x} \leq t \estart{\tau}{x} \leq \frac{2 t}{\alpha},
\]
and hence this gives that 
\[
\tmov \leq  \frac{2\lceil\log_2(1/\alpha)\rceil}{\alpha}\tmix
\]
and this completes the proof of the upper bound.

We now show the other direction, i.e.\ that there exists a positive constant $c_2$ so that 
\[
\tmix \leq c _2\tmov(\alpha).
\]
Since $\alpha<1/2$, there exists $\epsilon>0$ such that 
$\alpha+\epsilon <1/2$. By~\cite[4.35]{LevPerWil}, it follows that there exists a positive constant $c_3$ such that $\tmix(\alpha+ \epsilon) \geq c_3 \tmix$. Let $t<\tmix(\alpha+\epsilon)$. Then this means that there exists $x$ and a set $A$ so that 
\begin{align}\label{eq:badpoint}
P^{t}(x,A) <\pi(A) - (\alpha+\epsilon).
\end{align}
From that we immediately get that $\pi(A) >\alpha+\epsilon$. We now use the set $A$ to define a sequence of sets $(B_s)$ as follows: for $s < t$ define
\[
B_s = \{y: P^{t-s}(y,A) > \pi(A) - \alpha\}
\]
and for $s\geq t$ we let $B_s = \Omega$. 
Since $\pi$ is stationary, it follows that
\[
\pi(A) = \sum_{y\in B_s} P^{t-s}(y,A)\pi(y) + \sum_{y\in B_s^c} P^{t-s}(y,A)\pi(y) \leq \pi(B_s) + \pi(A) - \alpha.
\]
Rearranging, gives that 
$\pi(B_s) \geq \alpha$ for all $s$. 
We write $\tau_B = \min\{ t\geq 0: X_t \in B_t\}$. We will show  for a constant $\theta$ to be determined later we have that  
\begin{align}\label{eq:contradtrue}
\estart{\tau_B}{x} \geq \theta t.
\end{align}
We will show that for a $\theta$ to be specified later, assuming 
\begin{align}\label{eq:contrad}
\max_z \estart{\tau_B}{z} \leq \theta t
\end{align}
will yield a contradiction. 

By Markov's inequality and~\eqref{eq:contrad} we have that for all $z$
\[
\prstart{\tau_B \leq t}{z} \geq 1 - \theta.
\]
By the strong Markov property applied to the stopping time $\tau_B$ and Markov's inequality we have
\[
\prstart{X_t \in A}{x} \geq \prcond{X_t \in A}{\tau_B\leq t}{x} \prstart{\tau_B \leq t}{x} \geq \inf_{s\leq t} \inf_{w \in B_s} \prstart{X_{t-s} \in A}{w} (1-\theta) \geq 
(\pi(A) - \alpha)(1-\theta),
\]
which by choosing $\theta$ small enough can be made bigger than $\pi(A) - (\alpha + \epsilon)$. This contradicts the choice of  $x$ in~\eqref{eq:badpoint}. Therefore~\eqref{eq:contradtrue} holds and this completes the proof.
\end{proof}

\section{Collision with a moving target on $\Z_n^d$ and $\Z^d$}
\label{sec:torus}

In this section we prove Theorem~\ref{thm:randomwalkzn} and Proposition~\ref{pro:sausage}. We start by introducing some notation and background on rearrangement inequalities. We follow closely~Section~2.1 of Burchard and  Schmuckenschl{\"a}ger~\cite{BurSch}.

\subsection{Notation and background}

Let $M$ be a metric space. A \textit{reflection} $\sigma:M\to M$  is an isometry such that
\begin{itemize}
\item $\sigma^2 x = x$ for all $x\in M$;

\item $M$ is the disjoint union of the set of fixed points $H^0$, and two half spaces $H^-$ and $H^+$ which are exchanged by $\sigma$, i.e.
\begin{align*}
&\sigma x = x \ \ x \in H^0, \\
& \sigma H^+ = H^-;
\end{align*}

\item $d(x,y) < d(x,\sigma y)$ for all $x,y \in H^+$.
\end{itemize}

From now on whenever we define a reflection $\sigma$ we will specify
%indicate
$H^+$ and $H^-$.

The \textit{two-point rearrangement} of a function $f$ is defined to be
\[
f^{\sigma}(x) = \left\{
                           \begin{array}{ll}
                             \max\{f(x),f(\sigma x)\}, & \hbox{if  $x \in H^+$;} \\
                            \min\{f(x),f(\sigma x)\}, & \hbox{if $x \in H^-$;}\\
                             f(x), & \hbox{if $x \in H^0$.}
                           \end{array}
                         \right.
\]
By taking $f = \1(A)$ we get that the two-point rearrangement of a set $A$, denoted $A^\sigma$, satisfies
\begin{align*}
& A^\sigma \cap H^+ = (A \cup \sigma A) \cap H^+ 
\\
& A^{\sigma} \cap H^- = (A \cap \sigma A) \cap H^-.
\end{align*}

We now recall a combinatorial lemma from~\cite[Lemma~2.6]{BurSch}.

Consider the two-point space  $\{+,-\}$ with the metric defined by $d(+,-)=1$. The map $\sigma$ that exchanges $+$ and $-$ is a reflection with no fixed points and with $H^+ = \{+\}$ and $H^-=\{-\}$ as the positive and negative half-spaces. For any function $\phi$ on $\{+,-\}$, let $\phi^\sigma$ be the corresponding two-point rearrangement of $\phi$:
\begin{align}\label{eq:almutsigma}
\phi^{\sigma}(+) = \max\{ \phi(+),\phi(-)\} \ \ \text{ and } \ \ \phi^\sigma(-) = \min\{ \phi(+),\phi(-)\}.
\end{align}

\begin{lemma}[Burchard and  Schmuckenschl{\"a}ger~\cite{BurSch}]\label{lem:almut}
Let  $\phi_1,\ldots,\phi_n$ be nonnegative functions on the set $\{+,-\}$. For each pair $ij$, let $k_{i,j}(\epsilon,\epsilon') = a_{ij} + b_{ij} \1(\epsilon = \epsilon')$ with $a_{ij} , b_{ij} \geq 0$. Consider the function 
\[
J(\phi_1,\ldots,\phi_n) = \sum_{\pm} \prod_{1\leq i \leq n} \phi_i(\epsilon_i) \prod_{1\leq i \leq j\leq n} k_{i,j}(\epsilon_i,\epsilon_j).
\]
Then
\[
J(\phi_1,\ldots,\phi_n) \leq  J(\phi_1^{\sigma},\ldots,\phi_n^\sigma).
\]
\end{lemma}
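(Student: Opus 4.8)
The plan is to prove Lemma~\ref{lem:almut} by a direct case analysis, organized so that the combinatorial structure of the coefficients $k_{i,j}(\epsilon_i,\epsilon_j)=a_{ij}+b_{ij}\1(\epsilon_i=\epsilon_j)$ is exploited rather than fought. First I would fix the sign pattern $\bm\epsilon=(\epsilon_1,\dots,\epsilon_n)\in\{+,-\}^n$ and think of the summand
\[
W(\bm\epsilon)=\prod_{1\le i\le n}\phi_i(\epsilon_i)\prod_{1\le i\le j\le n}k_{i,j}(\epsilon_i,\epsilon_j)
\]
as a weight. The key observation is that each $k_{i,j}$ is \emph{monotone} in the sense that $k_{i,j}(\epsilon,\epsilon')$ depends only on whether $\epsilon=\epsilon'$, and since $a_{ij},b_{ij}\ge0$ the "aligned" value $a_{ij}+b_{ij}$ is at least the "misaligned" value $a_{ij}$. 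Consequently, among all $2^n$ sign patterns, the two constant patterns $(+,+,\dots,+)$ and $(-,-,\dots,-)$ maximise the product of $k$-factors. After the two-point rearrangement, $\phi_i^\sigma(+)=\max\{\phi_i(+),\phi_i(-)\}\ge\phi_i^\sigma(-)=\min\{\phi_i(+),\phi_i(-)\}$, so the rearranged functions are "sorted" with the large value on $+$.

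The main step is then a pairing argument. I would pair each sign pattern $\bm\epsilon$ with its flip $-\bm\epsilon$; the $k$-factors are identical for the two (since $\1(\epsilon_i=\epsilon_j)$ is flip-invariant), so it suffices to show, for every partition of $\{1,\dots,n\}$ into the set $S$ of indices where $\bm\epsilon=+$ and its complement, that
\[
\prod_{i\in S}\phi_i(+)\prod_{i\notin S}\phi_i(-)\;+\;\prod_{i\in S}\phi_i(-)\prod_{i\notin S}\phi_i(+)\;\le\;\prod_{i\in S}\phi_i^\sigma(+)\prod_{i\notin S}\phi_i^\sigma(-)\;+\;\prod_{i\in S}\phi_i^\sigma(-)\prod_{i\notin S}\phi_i^\sigma(+),
\]
\emph{provided we can reshuffle the $k$-factors appropriately} — but in fact the $k$-factors are the same on both sides of each such local inequality, so they cancel and one is left with exactly the elementary two-term rearrangement inequality: for nonnegative reals, writing $u_i=\phi_i(+)$, $v_i=\phi_i(-)$, $u_i^*=\max\{u_i,v_i\}$, $v_i^*=\min\{u_i,v_i\}$, one has $\prod_{i\in S}u_i\prod_{i\notin S}v_i+\prod_{i\in S}v_i\prod_{i\notin S}u_i\le\prod_{i\in S}u_i^*\prod_{i\notin S}v_i^*+\prod_{i\in S}v_i^*\prod_{i\notin S}u_i^*$. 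This last inequality is the case $n=1$ of the classical fact that sorting two sequences the same way does not decrease the "crossed" sum, and it follows from expanding $(u_j-v_j)(\text{something nonnegative})\ge0$ for the unique index $j$ (if any) where the rearrangement actually swaps a value; one proves it by induction on the number of "misordered" coordinates, swapping one at a time.

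The subtlety I expect to be the real obstacle is the presence of the $k$-factors when $b_{ij}>0$: it is \emph{not} true that one can compare the weights sign-pattern by sign-pattern after pairing $\bm\epsilon$ with $-\bm\epsilon$, because a single swap of $\phi_i(+)\leftrightarrow\phi_i(-)$ changes which patterns are "aligned" and so the $k$-factors do \emph{not} simply cancel across the inequality. The correct route is instead to do the rearrangement \emph{one coordinate at a time}: it suffices to prove the statement when $\phi^\sigma$ differs from $\phi$ in the single coordinate $i$, i.e.\ $\phi_i(+)<\phi_i(-)$ and we swap them. Grouping the $2^n$ terms of $J$ into $2^{n-1}$ pairs differing only in the sign $\epsilon_i$, each pair contributes
\[
\phi_i(+)\,C_+(\bm\epsilon_{-i})+\phi_i(-)\,C_-(\bm\epsilon_{-i}),
\]
where $C_\pm$ collects the other $\phi$'s and all $k$-factors, and the point is that \emph{whether $C_+\ge C_-$ or not is governed entirely by the $k_{j,i}$ factors}: because $k_{j,i}(+,+)\ge k_{j,i}(+,-)$ and $k_{j,i}(-,-)\ge k_{j,i}(-,+)$, one checks that $C_+(\bm\epsilon_{-i})\ge C_-(\bm\epsilon_{-i})$ whenever the majority... no — more carefully, one shows that the sign pattern on the \emph{other} coordinates that makes $C_+$ large is exactly $+$ on the remaining coordinates, and by a second application of the same monotonicity across the remaining $2^{n-1}$ pairs the whole sum is seen to increase when $\phi_i(+),\phi_i(-)$ are sorted into decreasing order. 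Concretely: since $\phi_i(+)+\phi_i(-)$ is fixed under the swap while the product $\phi_i(+)\phi_i(-)$ is also fixed, swapping only redistributes the \emph{larger} value onto the terms with the \emph{larger} coefficient, and
\[
\big(\phi_i(-)-\phi_i(+)\big)\sum_{\bm\epsilon_{-i}}\big(C_+(\bm\epsilon_{-i})-C_-(\bm\epsilon_{-i})\big)\1\{C_+\ge C_-\}\ \ge\ 0
\]
is the inequality to verify, and it reduces to checking $C_+\ge C_-$ on the relevant index set, which follows from $b_{j,i}\ge0$ term by term. Iterating over the coordinates where $\phi_i^\sigma\ne\phi_i$ (at most $n$ of them) gives $J(\phi_1,\dots,\phi_n)\le J(\phi_1^\sigma,\dots,\phi_n^\sigma)$. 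I would present the one-coordinate step as the crux and the iteration as routine.
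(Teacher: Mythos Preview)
The paper does not prove this lemma; it is quoted from Burchard--Schmuckenschl\"ager and used as a black box. So there is no ``paper's own proof'' to compare against, and your attempt must stand on its own.

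Unfortunately it does not. Your final strategy --- sort the $\phi_i$ one coordinate at a time --- is based on the implicit claim that replacing a single $\phi_i$ by $\phi_i^\sigma$ (keeping the others fixed) never decreases $J$. This is false. Take $n=2$, $k_{1,1}=k_{2,2}\equiv 1$, $k_{1,2}(\epsilon,\epsilon')=\1(\epsilon=\epsilon')$ (so $a_{12}=0$, $b_{12}=1$), and $\phi_1=\phi_2$ given by $\phi_i(+)=0$, $\phi_i(-)=1$. Then
\[
J(\phi_1,\phi_2)=\phi_1(+)\phi_2(+)+\phi_1(-)\phi_2(-)=1,\qquad
J(\phi_1^\sigma,\phi_2)=\phi_1^\sigma(+)\phi_2(+)+\phi_1^\sigma(-)\phi_2(-)=0.
\]
So swapping $\phi_1$ alone drops $J$ from $1$ to $0$. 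In your notation, with $i=1$ one has $C_+(\epsilon_2)-C_-(\epsilon_2)=b_{12}\bigl(\1(\epsilon_2=+)-\1(\epsilon_2=-)\bigr)\phi_2(\epsilon_2)$, and summing over $\epsilon_2$ gives $b_{12}(\phi_2(+)-\phi_2(-))$, which is negative whenever $\phi_2$ is still unsorted. The displayed inequality you wrote with the extra indicator $\1\{C_+\ge C_-\}$ is trivially true but is not the inequality you need.

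Your earlier idea --- pairing $\bm\epsilon$ with $-\bm\epsilon$ and comparing pair by pair --- also fails, and in fact you essentially noticed this. The $k$-factors do cancel within each pair, but the residual inequality
\[
\prod_{i\in S}u_i\prod_{i\notin S}v_i+\prod_{i\in S}v_i\prod_{i\notin S}u_i\ \le\ \prod_{i\in S}u_i^*\prod_{i\notin S}v_i^*+\prod_{i\in S}v_i^*\prod_{i\notin S}u_i^*
\]
is false for general $S$: with $n=2$, $S=\{1\}$, $(u_1,v_1)=(0,1)$, $(u_2,v_2)=(1,0)$ the left side is $1$ and the right side is $0$. What salvages the full lemma is a \emph{global} rearrangement, not a local one: after expanding $\prod_{i\le j}\bigl(a_{ij}+b_{ij}\1(\epsilon_i=\epsilon_j)\bigr)$ over subsets of edges and factoring over the connected components of each edge set, the problem reduces to the single inequality $\prod_i u_i+\prod_i v_i\le\prod_i u_i^*+\prod_i v_i^*$, which \emph{is} true (and can be proved by induction on $n$ using $u_i^*v_i^*=u_iv_i$). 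Your write-up does not reach this reduction.
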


\subsection{Random walk on $\Z_n^d$}

\begin{lemma}\label{lem:startata}
Let $\sigma$ be a reflection in $\Z_n^d$ and $X$ a lazy simple random walk in $\Z_n^d$. Then for all times $t$, all starting states $b$ and all sets $D_i\subseteq \Z_n^d$ we have
\[
\prcond{X_1 \in D_1,\ldots, X_t \in D_t}{X_0 \in \{b\}}{} \leq \prcond{X_1 \in D_1^\sigma,\ldots, X_t \in D_t^\sigma}{X_0 \in \{b\}^{\sigma}}{}.
\]
\end{lemma}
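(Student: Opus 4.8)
## Proof Plan for Lemma~\ref{lem:startata}

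The plan is to prove this by induction on $t$, peeling off the \emph{last} step of the walk and applying the two-point rearrangement Lemma~\ref{lem:almut} at each stage. First I would set up notation: write $p(x,y)$ for the lazy transition kernel on $\Z_n^d$, and observe the crucial compatibility between the reflection $\sigma$ and the kernel, namely $p(x,y) = p(\sigma x, \sigma y)$ (since $\sigma$ is an isometry of $\Z_n^d$ and the walk is the lazy simple random walk) together with the inequality $p(x,y) \ge p(x,\sigma y)$ for $x,y \in H^+$, which follows from the defining property $d(x,y) < d(x,\sigma y)$ of a reflection and the fact that the lazy SRW kernel is a nonincreasing function of graph distance (it is positive exactly on distance $\le 1$, with a larger mass on distance $0$). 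The key algebraic point is that $p$ has the form required by Lemma~\ref{lem:almut}: restricted to a pair of $\sigma$-partner points, $p(x,\cdot)$ takes one value on "same side as $x$" and another (smaller or equal) value on "opposite side", i.e.\ it is of the shape $a + b\,\1(\text{same half-space})$.

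Next I would carry out the induction. For $t=0$ there is nothing to prove (or it is the trivial statement about $\{b\}$ vs $\{b\}^\sigma$, which is an identity since both are singletons and $\pi$ is uniform). For the inductive step, write
\[
\prstart{X_1 \in D_1, \ldots, X_t \in D_t}{b} = \sum_{y} \prstart{X_1 \in D_1, \ldots, X_{t-1} \in D_{t-1}, X_{t-1} = y}{b}\, p(y, D_t),
\]
and group the sum over $y$ into $\sigma$-orbits $\{y, \sigma y\}$ (plus the fixed set $H^0$). On each orbit, the two relevant quantities are $g(y) := \prstart{X_1 \in D_1,\ldots,X_{t-1}\in D_{t-1}, X_{t-1}=y}{b}$ and the "absorption weights" $p(y,D_t)$, and I want to compare $\sum_y g(y) p(y,D_t)$ with $\sum_y g^\sigma(y) p(y, D_t^\sigma)$ where $g^\sigma$ is the two-point rearrangement arising from running the walk from $\{b\}^\sigma$ with sets $D_i^\sigma$. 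This is exactly the combinatorial inequality of Lemma~\ref{lem:almut} applied orbit-by-orbit: the functions $\phi_i$ on $\{+,-\}$ encode the values of $g$ and of the indicators of $D_t$ on a single $\sigma$-pair, and the kernels $k_{ij}$ encode the factors $p(\cdot,\cdot)$, which are of the admissible form $a_{ij} + b_{ij}\1(\epsilon_i = \epsilon_j)$ by the observation above. One also needs the inductive hypothesis to say that the rearranged path-probabilities from $\{b\}^\sigma$ dominate the rearrangements of the path-probabilities from $b$ — this requires being careful that "rearrange then evolve" and "evolve then rearrange" are compatible, which is precisely what Lemma~\ref{lem:almut} delivers when set up correctly.

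The main obstacle I anticipate is bookkeeping: making the reduction to Lemma~\ref{lem:almut} genuinely clean. The statement of that lemma is about a \emph{product} of pairwise kernels $\prod_{i\le j} k_{ij}$, whereas a $t$-step path probability is a product of \emph{consecutive} one-step kernels $p(X_0,X_1)p(X_1,X_2)\cdots$ — so the natural framing is to take the $n$ in Lemma~\ref{lem:almut} to be $t+1$ (one $\phi$ per time coordinate, each $\phi_i$ being the restriction of $\1(D_i)$, or $\1(\{b\})$ for $i=0$, to a $\sigma$-pair), set $a_{i,i+1}, b_{i,i+1}$ from the kernel $p$ and all other $a_{ij} = 1$, $b_{ij}=0$, and then sum the resulting inequality over all $\sigma$-orbits of paths. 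I would need to verify that summing the per-orbit inequalities reproduces exactly the two sides of the claimed inequality — i.e.\ that $\sum_{\text{orbits}} J(\ldots) = \prstart{X_1\in D_1,\ldots}{b}$ and $\sum_{\text{orbits}} J(\ldots^\sigma) = \prstart{X_1 \in D_1^\sigma,\ldots}{b^\sigma}$ — using the definitions of $A^\sigma$ on $H^+$ and $H^-$. The fixed-point set $H^0$ contributes terms where $\sigma$ acts trivially and these pass through harmlessly. Once this dictionary is in place the inequality is immediate from Lemma~\ref{lem:almut}; the work is entirely in confirming the form of the kernel and the orbit-sum identity.
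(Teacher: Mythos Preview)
Your final paragraph --- applying Lemma~\ref{lem:almut} directly with $n=t+1$, one $\phi_i$ per time-coordinate encoding $\1(x_i^{\epsilon}\in D_i)$, and $k_{i,i+1}$ built from the one-step kernel (all other $k_{ij}\equiv 1$) --- is exactly the paper's proof. The paper does not induct on $t$: it writes the full path sum $\sum_{x_0,\ldots,x_t}\prod p(x_{i-1},x_i)\prod\1(x_i\in D_i)$, restricts to representatives $x_0,\ldots,x_t\in H^+$, sums over sign patterns $(\epsilon_0,\ldots,\epsilon_t)\in\{\pm\}^{t+1}$, and invokes Lemma~\ref{lem:almut} once per tuple of representatives. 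Your identification of the kernel shape $p(x_{i-1}^{\epsilon},x_i^{\epsilon'})=a+b\,\1(\epsilon=\epsilon')$ with $b\ge 0$ (from $p(x^+,y^+)\ge p(x^-,y^+)$ for $x,y\in H^+$, using laziness) and of $\phi_i^\sigma(\epsilon)=\1(x_i^\epsilon\in D_i^\sigma)$ is correct and is all that is needed.

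Your opening induction plan, however, has the gap you yourself suspect. The inductive hypothesis is the lemma for $t-1$, which controls only the \emph{total} $\sum_y g(y)$, not the orbit-wise values $g(y),g(\sigma y)$ that you must pair against $p(y,D_t),p(\sigma y,D_t)$. You cannot recover orbit-wise control by substituting $D_{t-1}\cap\{y,\sigma y\}$ into the hypothesis, because rearrangement does not commute with intersection: $(D_{t-1}\cap A)^\sigma\neq D_{t-1}^\sigma\cap A^\sigma$ in general. So ``rearrange then evolve'' versus ``evolve then rearrange'' really cannot be bridged by the lemma statement alone; it needs the full multilinear inequality of Lemma~\ref{lem:almut} applied to the whole path at once. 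Drop the induction and proceed directly as in your last paragraph.
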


\begin{proof}[{\bf Proof}]

Let $p(x,y)$ be the transition probability in one step of the lazy simple random walk in $\Z_n^d$, i.e.\
\[
p(x,y) =  \1(x=y) \frac{1}{2} + \1(|x-y| =1)\frac{1}{4d}.
\]
By the Markov property we have
\[
\prcond{X_1 \in D_1,\ldots, X_t \in D_t}{X_0 \in \{b\}}{} = \sum_{x_0,\ldots, x_t} \prod_{i=1}^{t} p(x_{i-1},x_i) \prod_{i=0}^{t} \1(x_i \in D_i),
\]
where $D_0 = \{b\}$.
If $H^+$ and $H^-$ are the positive and negative respectively half spaces exchanged by $\sigma$, then we can write the above sum 
\[
\sum_{x_0,\ldots, x_t} \prod_{i=1}^{t} p(x_{i-1},x_i) \prod_{i=0}^{t} \1(x_i \in D_i) = \sum_{x_0,\ldots, x_t \in H^+} \sum_{\pm} \prod_{i=1}^{t} p(x_{i-1}^{\pm}, x_i^{\pm}) \prod_{i=0}^{t} \1(x_i^{\pm} \in D_i),
\]
where 
\begin{align}\label{eq:defplus}
x^+ = \left\{
                           \begin{array}{ll}
                             x, & \hbox{if  $x \in H^+$} \\
                           \sigma x, & \hbox{if $x \in H^-$}
                           \end{array}
                         \right.
                        \ \text{ and } \
x^-= \left\{
                           \begin{array}{ll}
                             \sigma x, & \hbox{if  $x \in H^+$} \\
                              x, & \hbox{if $x \in H^-$.}
                           \end{array}
                         \right.
                         \end{align}  
We now fix a choice of $x_1,\ldots, x_t \in H^+$. It suffices to show that 
\begin{align}\label{eq:goalrear}
\sum_{\pm} \prod_{i=1}^{t} p(x_{i-1}^{\pm}, x_i^{\pm}) \prod_{i=0}^{t} \1(x_i^{\pm} \in D_i) \leq \sum_{\pm} \prod_{i=1}^{t} p(x_{i-1}^{\pm}, x_i^{\pm}) \prod_{i=0}^{t} \1(x_i^{\pm} \in D_i^\sigma).
\end{align}                              
For $\epsilon,\epsilon' \in \{+,-\}$ we define $k_{i,j}(\epsilon,\epsilon') =1$ if $j-i \neq 1$ and otherwise
\[
k_{i-1,i}(\epsilon,\epsilon') = p(x_{i-1}^-, x_i^+) + \1(\epsilon = \epsilon') (p(x_{i-1}^+,x_i^+) - p(x_{i-1}^-,x_i^+)).
\]
By the definition of the transition probability we have $p(x_{i-1}^-, x_i^+) \leq p(x_{i-1}^+,x_i^+)$ for $x_{i-1},x_i \in H^+$. Therefore $k_{i,j}$ satisfies the assumptions of Lemma~\ref{lem:almut} and if we set $\phi_i(\epsilon) = \1(x_i^{\epsilon} \in D_i)$, then
we can write
\begin{align}\label{eq:firsteq}
\sum_{\pm} \prod_{i=1}^{t} p(x_{i-1}^{\pm}, x_i^{\pm}) \prod_{i=0}^{t} \1(x_i^{\pm} \in D_i) = \sum_{\pm} 
\prod_{i=0}^{t} \phi_i(\epsilon_i)
\prod_{0\leq i\leq j\leq t} 
k_{i,j}(\epsilon_{i},\epsilon_j).
\end{align}
Applying Lemma~\ref{lem:almut} we infer
\begin{align}\label{eq:application}
\sum_{\pm} 
\prod_{i=0}^{t} \phi_i(\epsilon_i)
\prod_{0\leq i\leq j\leq t} 
k_{i,j}(\epsilon_{i},\epsilon_j) \leq \sum_{\pm} 
\prod_{i=0}^{t} \phi_i^\sigma(\epsilon_i)
\prod_{0\leq i\leq j\leq t} 
k_{i,j}(\epsilon_{i},\epsilon_j).
\end{align}
Since $\phi_i^{\sigma}(\epsilon) = \1(x_i^\epsilon \in D_i^{\sigma})$, inequality~\eqref{eq:application} together with~\eqref{eq:firsteq} concludes the proof of~\eqref{eq:goalrear} and thus completes the proof of the lemma.
\end{proof}

\begin{remark}\rm{
Note that it is essential that the random walk on $\Z_n^d$ be lazy. In the proof above this was used to show that the kernel $k$ satisfies the assumptions of Lemma~\ref{lem:almut}. 
}
\end{remark}

\begin{figure}
\begin{center}
\epsfig{file=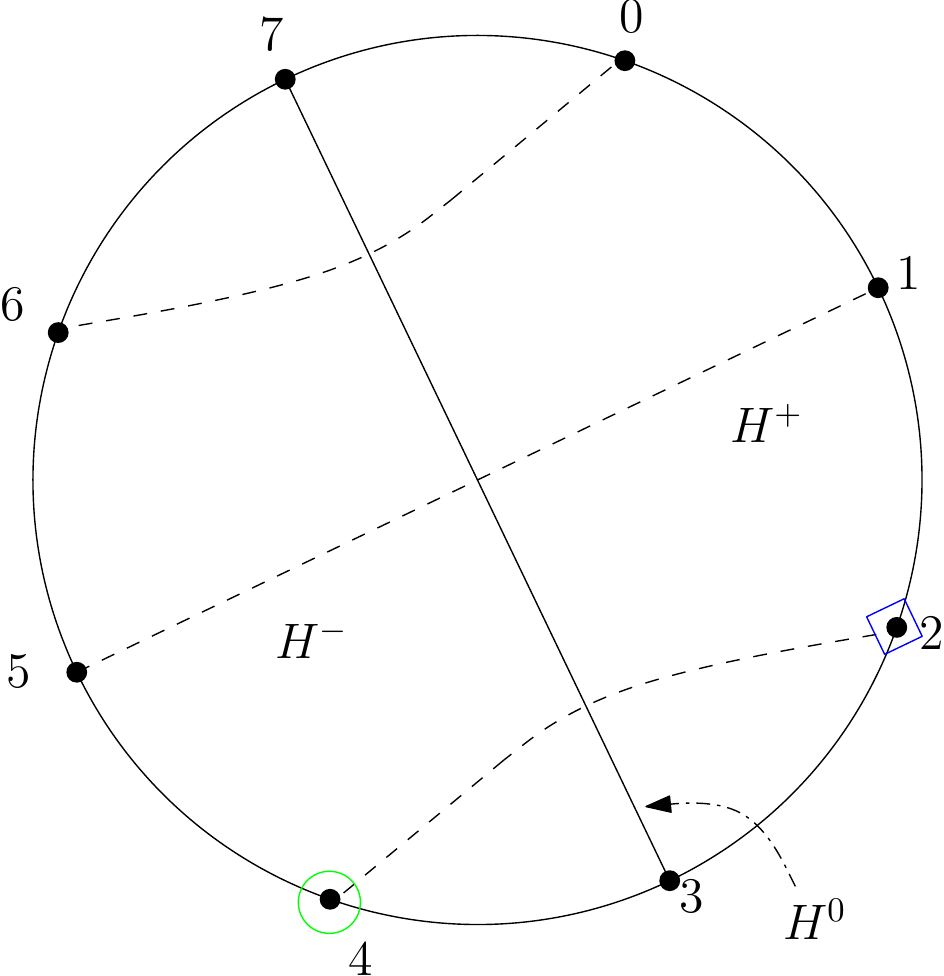, scale=0.7}
\caption{\label{fig:gross} A reflection on $\Z_8$}
\end{center}
\end{figure}

\begin{proof}[{\bf Proof of Theorem~\ref{thm:randomwalkzn}}]

We first prove the theorem for $d=1$. 

For $i=1,\ldots, t$ we write $D_i = \Z_n \setminus \{f(i)\}$.
Then we have
\[
\prstart{X_1 \neq f(1), \ldots, X_t \neq f(t)}{0} = \prstart{X_1 \in D_1, \ldots, X_t \in D_t}{0}.
\]
We now want to find a sequence of reflections $\sigma_1,\ldots, \sigma_k$ such that $D_i^{\sigma_1\ldots \sigma_k} = \Z_n \setminus \{ a\}$. 

We first give the reflection $\sigma$ such that $D_1^{\sigma} = \Z_n \setminus \{a\}$.
We carry out all the details in the case when $n$ is odd and $f(1) + a$ is even and satisfies $f(1)+a \geq n-1$.
The other cases follow similarly. We define
\[
\sigma_1(x) = (a+ f(1) - x) \bmod n 
\]
and we let 
\begin{align*}
&H^+ = \Z_n \cap \left(\left( \frac{a+f(1)}{2} , n-1\right] \cup \left[ 0, \frac{a+f(1)}{2} - \frac{n-1}{2} \right)   \right) \ \text{ and }
\\
& H^- = \left(H^+\right)^c \setminus \left\{\frac{a+f(1)}{2}\right\}.
\end{align*}

Then with this definition of $H^+$ and $H^-$ it is clear that $D_1^{\sigma_1} = \Z_n\setminus\{a\}$ and $\left(\Z_n \setminus\{a\}\right)^{\sigma_1} = \Z_n\setminus\{a\}$ and $\{0\}^{\sigma_1} = \{0\}$.

Having symmetrized the set $D_1$, we now want to find a reflection $\sigma_2$ such that $D_2^{\sigma_1\sigma_2} = \Z_n \setminus \{a\}$. 
To do that we use exactly the same construction as for $\sigma_1$ above. Hence we get $\left(\Z_n\setminus\{a\}\right)^{\sigma_2} =\Z_n\setminus \{a\}$ and $\{0\}^{\sigma_2} = \{0\}$. Therefore $D_1^{\sigma_1 \sigma_2}=\Z_n\setminus \{a\}$. Continuing in this manner we find $k\leq t$ reflections $\sigma_1,\ldots, \sigma_k$ such that for all $i$
\[
D_i^{\sigma_1\ldots \sigma_k} = \Z_n\setminus \{a\}.
\]

\begin{figure}
\begin{center}
\epsfig{file=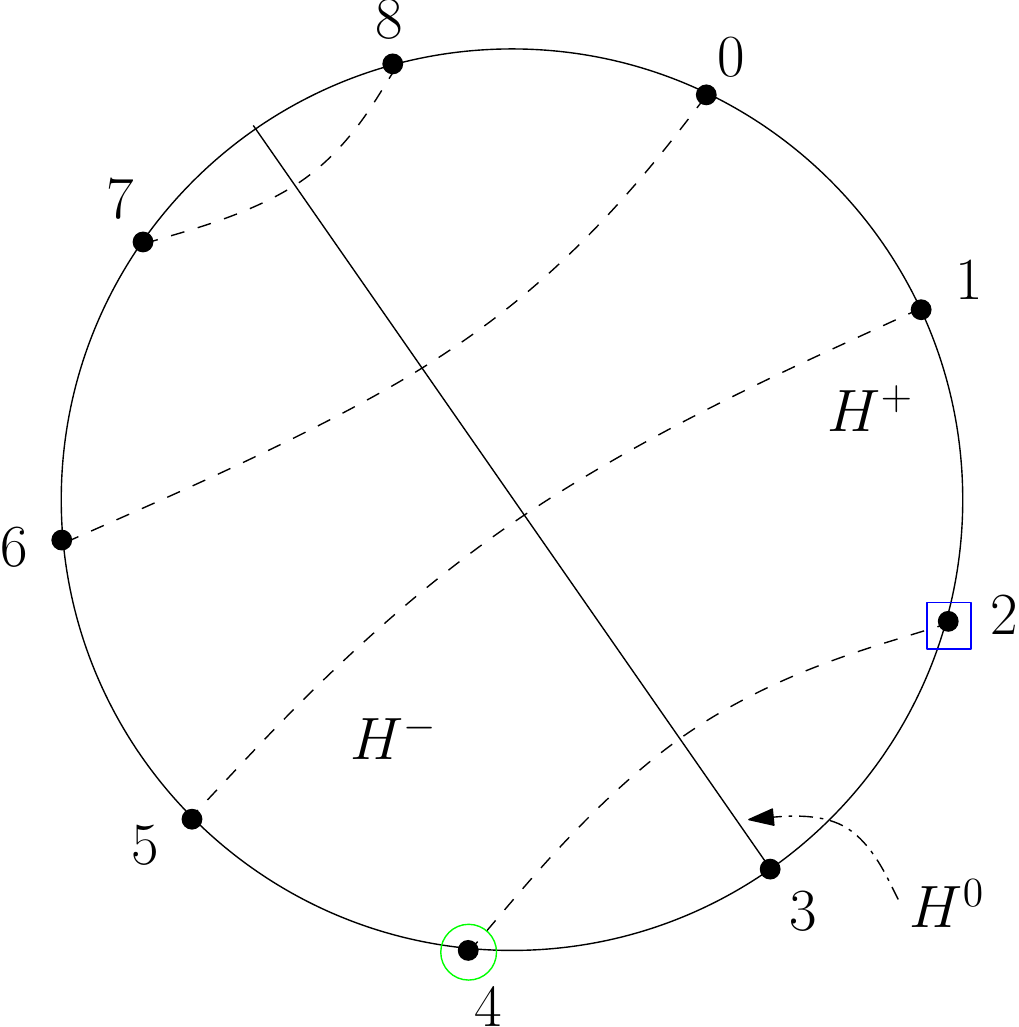, scale=0.7}
\caption{\label{fig:gross} A reflection on $\Z_9$}
\end{center}
\end{figure}

Applying Lemma~\ref{lem:startata} $k$ times when $b=0$, i.e.\ for the reflections $\sigma_1,\ldots,\sigma_k$, concludes the proof in the case $d=1$. 

For higher dimensions, the statement follows from carrying out the above procedure coordinate by coordinate. 
\end{proof}

\begin{remark}\rm{
We note that for a continuous time random walk on $\Z_n^d$ 
the analogue of Theorem~\ref{thm:randomwalkzn} holds, i.e.\ 
\[
\prstart{X_s \neq f(s), \forall s\leq t}{0} \leq \prstart{X_s \neq a, 
\forall s\leq t}{0}.
\]
To see this, view the continuous time walk as the continuous time version of the lazy walk with exponential clocks of rate $2$. Then condition on the number of lazy steps taken by the continuous time walk by time $t$, and apply Theorem~\ref{thm:randomwalkzn}. 
}
\end{remark}

\begin{proof}[{\bf Proof of Proposition~\ref{pro:torus}}]

It is clear that on the cycle $\Z_n$ among all sets $A$ of the same measure the hardest to hit is an interval. The first hitting time of an interval on the cycle is the same as the first hitting time of the endpoints, which can be glued to a single point, and hence the hitting time is maximized when this point is staying fixed.
\end{proof}

\subsection{Random walk on $\Z^d$}

In this section we prove Proposition~\ref{pro:sausage}. The proof follows in a similar way to the proof of Proposition~\ref{pro:torus} and uses again Lemma~\ref{lem:almut}.

\begin{lemma}\label{lem:prelimvol}
Let $X$ be a lazy simple random walk on $\Z^d$ starting from $0$ and let $(D_i)$ be subsets of $\Z^d$ that are symmetric around the origin, i.e.\ $D_i = -D_i$ for all $i$. If $\sigma$ is a reflection on $\Z^d$, then for all $t$ we have
\[
\E{\vol{\bigcup_{s=0}^{t}\left(X_s + D_s\right)}} \geq \E{\vol{\bigcup_{s=0}^{t}\left(X_s + D_s^{\sigma} \right)}}.
\]
\end{lemma}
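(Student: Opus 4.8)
The plan is to reduce the volume statement to a probabilistic statement about the random walk path visiting a sequence of moving sets, and then apply the two-point rearrangement machinery (Lemma~\ref{lem:almut}) in the same spirit as Lemma~\ref{lem:startata}. First I would rewrite the expected volume via Fubini: for any fixed $z\in\Z^d$,
\[
\pr{z \in \bigcup_{s=0}^{t}(X_s+D_s)} = \pr{\exists\, s\le t:\ X_s \in z - D_s} = 1 - \pr{\forall\, s\le t:\ X_s \notin z-D_s},
\]
so that
\[
\E{\vol{\bigcup_{s=0}^{t}(X_s+D_s)}} = \sum_{z\in\Z^d}\Big(1 - \prstart{X_s \notin z-D_s,\ \forall s\le t}{0}\Big).
\]
Hence the claimed inequality is equivalent to
\[
\sum_{z}\prstart{X_s \notin z-D_s,\ \forall s\le t}{0} \ \le\ \sum_{z}\prstart{X_s \notin z-D_s^{\sigma},\ \forall s\le t}{0}.
\]
Using the symmetry $D_s = -D_s$, the complement of $z-D_s$ can be handled directly; the key point is that two-point rearrangement interacts well with translates and complements, and that summing over all centres $z$ restores the full symmetry that a single reflection would otherwise break.

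The main step is then to bound each summand's path probability by a $J$-functional as in Lemma~\ref{lem:almut}. Fixing the path positions $x_1,\dots,x_t$ in the positive half-space $H^+$ and expanding over the $2^t$ sign choices exactly as in the proof of Lemma~\ref{lem:startata}, I would set $\phi_i(\epsilon) = \1(x_i^{\epsilon} \notin z - D_i)$ (equivalently work with the indicator of being in the complement) and use the same kernels $k_{i-1,i}(\epsilon,\epsilon') = p(x_{i-1}^-,x_i^+) + \1(\epsilon=\epsilon')\big(p(x_{i-1}^+,x_i^+)-p(x_{i-1}^-,x_i^+)\big)$, which are nonnegative because the walk is lazy. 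The subtlety compared to Lemma~\ref{lem:startata} is that rearrangement of the complement $\big(z - D_i\big)^c$ is not literally the complement of a rearrangement of $z-D_i$, and that we want the rearrangement to be $D_i^\sigma$ and not $(z-D_i)^\sigma$; this is where I would sum over $z$ first. For a fixed reflection $\sigma$ with half-spaces $H^\pm$, I would pair up the centre $z$ with its mirror image $\sigma(z)$ (using that $\sigma$ is an isometry of $\Z^d$ and a bijection on the set of centres), and show that
\[
\prstart{X_s \notin z-D_s,\forall s}{0} + \prstart{X_s \notin \sigma(z)-D_s,\forall s}{0} \ \le\ \prstart{X_s \notin z-D_s^\sigma,\forall s}{0} + \prstart{X_s \notin \sigma(z)-D_s^\sigma,\forall s}{0},
\]
where $D_s = -D_s$ is used to identify $\sigma(z) - D_s$ with a reflected copy of $z - D_s$. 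Summing this paired inequality over all $z$ gives the lemma.

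The hard part will be verifying that the two-point rearrangement applied simultaneously to the pair of sets $\{z - D_i,\ \sigma(z)-D_i\}$ (after restoring to indicators of complements) produces precisely the pair $\{z - D_i^\sigma,\ \sigma(z) - D_i^\sigma\}$ at the level of the $\phi_i$'s, so that one genuine application of Lemma~\ref{lem:almut} to each term in the $z$-sum — with the sign of the inequality working in the right direction for the \emph{complement} indicators — yields the bound. Concretely, one checks that for $x\in H^+$ and with $y = z - (\text{point of }D_i)$ running over $z-D_i$, the rearrangement rule $\min/\max$ applied to $\phi_i(\pm)=\1(x_i^\pm\notin z-D_i)$ matches the $\min/\max$ rule defining $D_i^\sigma$ after one pushes the reflection through the translation by $z$ and uses $D_i=-D_i$ together with the companion centre $\sigma(z)$. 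Once this bookkeeping is done, everything else — nonnegativity of the kernels via laziness, the expansion \eqref{eq:firsteq}, and the application \eqref{eq:application} — is exactly as in Lemma~\ref{lem:startata}, and Fubini finishes the proof.
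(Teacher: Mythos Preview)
Your overall plan --- Fubini, then expand the path probability as a product of transition kernels, then apply Lemma~\ref{lem:almut} with the kernels $k_{i-1,i}$ exactly as in Lemma~\ref{lem:startata} --- is the same as the paper's. The divergence is in how you handle the summation over the centre $z$, and this is where you create unnecessary difficulty for yourself.

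You propose to keep the walk started at $0$, keep the translated sets $z-D_s$, and then pair $z$ with $\sigma(z)$ to show a two-term inequality. You correctly flag this as the ``hard part'', but you do not carry it out, and in fact the pairing $z\leftrightarrow\sigma(z)$ is not the right one: after the change of variables below, the natural pairing of starting points is $x_0\leftrightarrow\sigma(x_0)$, which in your $z$-coordinates is $z\leftrightarrow -\sigma(-z)$, not $z\leftrightarrow\sigma(z)$. More to the point, the paper shows this whole manoeuvre is unnecessary. Using translation invariance of $p$ and the hypothesis $D_s=-D_s$, one shifts $y_i\mapsto y_i-z$ in the path sum; the effect is that the translated sets $z-D_s$ become the untranslated $D_s$, while the centre $z$ is absorbed into a free starting point $x_0$ ranging over all of $\Z^d$. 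Concretely one arrives at
\[
\E{\vol{\bigcup_{s\le t}(X_s+D_s)}}=\sum_{x_0,\dots,x_t}\prod_{i=1}^t p(x_{i-1},x_i)\Bigl(1-\prod_{i=0}^t \1(x_i\notin D_i)\Bigr).
\]
Now $x_0$ sits on exactly the same footing as $x_1,\dots,x_t$: decompose all of $x_0,\dots,x_t$ over $H^+$, and a single application of Lemma~\ref{lem:almut} with $\phi_i(\epsilon)=\1(x_i^\epsilon\notin D_i)$ gives the inequality (the ``$1$'' is unchanged, and the product term moves the right way; the sign flip you worry about for complements is harmless because the kernels $k_{i,j}$ depend only on $\1(\epsilon=\epsilon')$, so swapping all signs leaves the $J$-functional invariant). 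No pairing of centres, no pushing reflections through translations, is needed. Your missing idea is precisely this change of variables; once you make it, the ``hard part'' disappears.
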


\begin{proof}[{\bf Proof}]

Since $D_s = -D_s$ for all $s$, we have
\begin{align*}
 \E{\vol{\bigcup_{s=0}^{t} ( X_s + D_s)}} &= \E{\sum_{x_0\in \Z^d} \1\left(x_0 \in \bigcup_{s=0}^{t}(X_s +D_s)\right)} \\
&= \E{\sum_{x_0\in \Z^d}\1\left(\exists s\leq t: \ X_s \in x_0 +D_s\right)}.  
\end{align*}
Let $p(x,y)$ be the transition probability in one step of the lazy simple random walk in $\Z^d$, i.e.\ 
\[
p(x,y) =  \1(x=y) \frac{1}{2} + \1(|x-y| =1)\frac{1}{4d}.
\]
Then the Markov property of the random walk gives
\begin{align} \label{eq:markov}
\pr{\exists s\leq t: \ X_s \in x_0 + D_s} = 1 - \sum_{y_1,\ldots, y_t} \prod_{i=1}^{t} p(y_{i-1},y_i) \prod_{i=0}^{t} \1(y_i \notin x_0 + D_i),
\end{align}
where $y_0 = 0$.
Changing variables to $y_i-x_0$ and noticing that $p(0,y_1 +x_0) = p(-x_0,y_1)$ gives that the sum 
appearing in the right-hand side of~\eqref{eq:markov}
is equal to
\begin{align*}
\sum_{y_1,\ldots,y_t} p(-x_0,y_1)\1(-x_0 \notin D_0) \prod_{i=2}^{t} p(y_{i-1},y_i) \prod_{i=1}^{t} \1(y_i \notin D_i).
\end{align*}
Putting everything together in the expression for the expected volume of $\cup_{s\leq t}(\xi(s) + D_s)$ and changing variables from $-x_0$ to $x_0$ we get
\begin{align}\label{eq:expressionvol}
\E{\vol{\bigcup_{s=0}^{t} Q_n(f(s) + \xi(s))}}  = 
\sum_{x_0,x_1,\ldots, x_t} \prod_{i=1}^{t} p(x_{i-1},x_i) \left( 1 - \prod_{i=0}^{t} \1(x_i \notin D_i )    \right).
\end{align}
Decomposing the above sum into the positive and negative half spaces of $\sigma$, the right hand side of~\eqref{eq:expressionvol} can be written as 
\[
\sum_{x_0,x_1,\ldots, x_t \in H^+} \sum_{\pm} \prod_{i=1}^{t} p(x_{i-1}^{\pm}, x_i^{\pm}) \left( 1 - \prod_{i=0}^{t} \1(x_i^{\pm} \notin D_i )    \right),
\]
where $x^{+}$ and $x^-$ are as defined in~\eqref{eq:defplus} in the proof of Lemma~\ref{lem:startata}. Repeating the same arguments as in the proof of~\eqref{eq:goalrear} in Lemma~\ref{lem:startata} we get 
\begin{align*}
\sum_{\pm} \prod_{i=1}^{t}
p(x_{i-1}^{\pm}, x_i^{\pm}) \left( 1 - \prod_{i=0}^{t} \1(x_i^{\pm} \notin D_i )    \right) \geq \sum_{\pm} \prod_{i=1}^{t}
p(x_{i-1}^{\pm}, x_i^{\pm}) \left( 1 - \prod_{i=0}^{t} \1(x_i^{\pm} \notin D_i^{\sigma})\right).
\end{align*}
Hence, we conclude that
\[
\E{\vol{\bigcup_{s=0}^{t} \left(\xi(s) + D_s\right)}} \geq \E{\vol{\bigcup_{s=0}^{t} \left(\xi(s) + D_s^\sigma \right)}}
\]
and this finishes the proof of the lemma.
\end{proof}

\begin{proof}[{\bf Proof of Proposition~\ref{pro:sausage}}]

Let $r>0,x\in \Z^d$ and $Q_r(x) = [-r+x_1,r+x_1] \times \ldots \times [-r + x_d, r + x_d]$ be the box in $\Z^d$ of side length $2r+1$ centered at $x$. We want to show that 
\[
\E{\vol{\bigcup_{s=0}^{t} \left( \xi(s) + Q_n(f(s))\right)  }} \geq \E{\vol{\bigcup_{s=0}^{t} \left(\xi(s) + Q_n\right)  }},
\]
where $Q_n = [-n,n]^d$ as defined in the statement of the proposition.

We now want to find a sequence of reflections $\sigma_1,\ldots, \sigma_k$ such that $Q_n(f(s))^{\sigma_1\ldots \sigma_k} = Q_n$ for all~$s \leq t$.

First we show how to bring a non-centered interval to a centered one in $\Z$. Let $A = [a-n,a+n]$, where $a\in \Z$ and $n\in \N$. Define the reflection $\sigma$ around the point $a/2$ via
\[
\sigma(x) = a - x.
\]
Then it is clear that $\sigma$ maps the interval $[a-n,a+n]$ to the interval $[-n,n]$. If $a>0$, define $H^+ = \{ k \in \Z: k\leq a/2\}$ and $H^-$ to be its complement. If $a<0$, define $H^+ = \{ k \in \Z: k\geq a/2\}$. It is then easy to see that $A^\sigma = [-n,n]$ and $[-n,n]^\sigma = [-n,n]$.

Next we define reflections in $\Z^d$. Let $A=[a_1-n_1,a_1 + n_1] \times \ldots \times [a_d-n_d,a_d + n_d]$ and for $i=1,\ldots, d$ let 
\[
\sigma_i(x_1,\ldots,x_d) = (x_1,\ldots,x_{i-1}, a_i - x_i, x_{i+1},\ldots, x_d ).
\]
Then $A^{\sigma_1\ldots \sigma_d} = [-n_1,n_1]\times \ldots \times [-n_d, n_d]$ and if $B$ is a centered rectangle, then $B^{\sigma_1\ldots \sigma_d} = B$.

This way we see that there exist $k\leq td$ reflections $\sigma_1,\ldots, \sigma_{k}$ such that 
\[
Q_n(f(s))^{\sigma_1\ldots \sigma_{k}} = Q_n \ \ \text{ for all } \ \ s\leq t.
\]
Applying Lemma~\ref{lem:prelimvol} $k$ times concludes the proof of the proposition.
\end{proof}

\section{Better to run than hide}
\label{sec:example}

\begin{figure}[h!]
\begin{center}
\epsfig{file=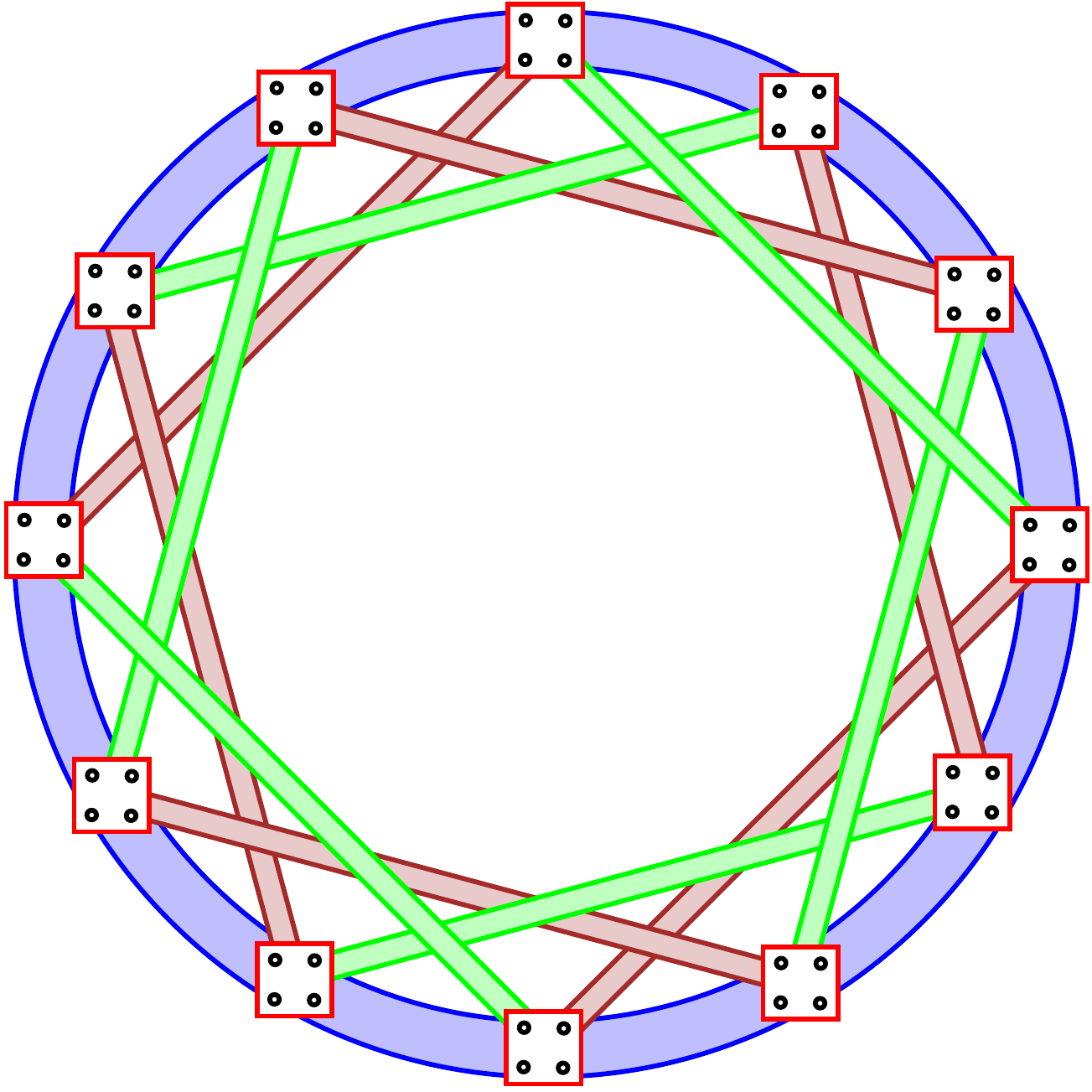, height=9cm}
\caption{\label{fig:gross} Graph $G_{2,12}$}
\end{center}
\end{figure}

In this section we give the proof of Theorem~\ref{thm:example}. We first define a class of graphs indexed by $n,m$ and denoted $G_{n,m}$. For $n=2$ and $m=12$ the graph is illustrated in Figure~\ref{fig:gross}. We then prove that $G_{2,12}$ is an example of a graph satisfying the statement of Theorem~\ref{thm:example} for a lazy discrete time walk. We conclude the section by proving that $G_{7,20}$ is such that it is best for a target to move in order to avoid collision with a continuous time walk.

\begin{definition}\rm{
Let $m$ be a multiple of $4$ and $G_{n,m}$ a graph on 
$n^2 m$ vertices divided into $m$ clusters. We think of the clusters as the nodes of $\Z_{m}$ and so we number them $0,\ldots,m{-}1$. We give coordinates to each element of every cluster. The elements of cluster $i$ have coordinates $i(a,b)$, where $a,b\in \Z_n$. 
We put an edge between
\begin{itemize}
\item[(1)] all pairs $i(a,b), j(c,d)$ with $|i-j|=1$;

\item[(2)] all pairs $i(a,b), j(a,d)$ with $b\neq d$, $i$ even and $j=(i+m/4)\bmod m$;

\item[(3)] all pairs $i(a,b), j(c,b)$ with $a\neq c$,  $i$ even and $j=(i-m/4)\bmod m$;

\item[(4)] all pairs $i(a,b),j(c,b)$ with $a\neq c$, $i$ odd and $j=(i+m/4)\bmod m$;

\item[(5)] all pairs $i(a,b), j(a,d)$ with $b\neq d$, $i$ odd and $j=(i-m/4)\bmod m$.

\end{itemize}

We call the edges of type (1) ``short'' while the edges of type (2), (3), (4) and (5) ``long''.
}
\end{definition}

\begin{remark}\rm{
Intuitively, notice that for a fixed $m$ as $n$ goes to infinity, the long edges of $G_{n,m}$ are rarely used, and hence $G_{n,m}$ looks more like $\Z_{m}$. 
}
\end{remark}

%We describe the graph $G$ that will be used to prove Theorem~\ref{thm:example}. It is a simple graph
%motivated by the example above. It has $48$ vertices that are divided into $12$ clusters of $4$ vertices each.
%We place each cluster of $4$ vertices on a cycle, look at Figure~\ref{fig:zoom}. We think of the clusters as nodes of $\Z_{12}$ and so we number them $0,\ldots,11$. We now give coordinates to each element of every cluster. The elements of cluster $i$ have coordinates $i(a,b)$, where $a,b \in \{0,1\}$. 
%
%We put an edge between
%\begin{itemize}
%\item[(1)] all pairs $i(a,b), j(c,d)$ with $|i-j|=1$;
%
%\item[(2)] all pairs $i(a,b), j(a,d)$ with $b\neq d$, $i$ even and $j=(i+3)\bmod 12$;
%
%\item[(3)] all pairs $i(a,b), j(c,b)$ with $a\neq c$,  $i$ even and $j=(i-3)\bmod 12$;
%
%\item[(4)] all pairs $i(a,b),j(c,b)$ with $a\neq c$, $i$ odd and $j=(i+3)\bmod 12$;
%
%\item[(5)] all pairs $i(a,b), j(a,d)$ with $b\neq d$, $i$ odd and $j=(i-3)\bmod 12$.
%
%\end{itemize}
%
%We call the edges of type (2), (3), (4) and (5) long edges, while the edges of type (1) short.

\begin{figure}
\label{fig:zoom}
\begin{center}
\subfigure[Short edges]{\label{fig:short}
\epsfig{file=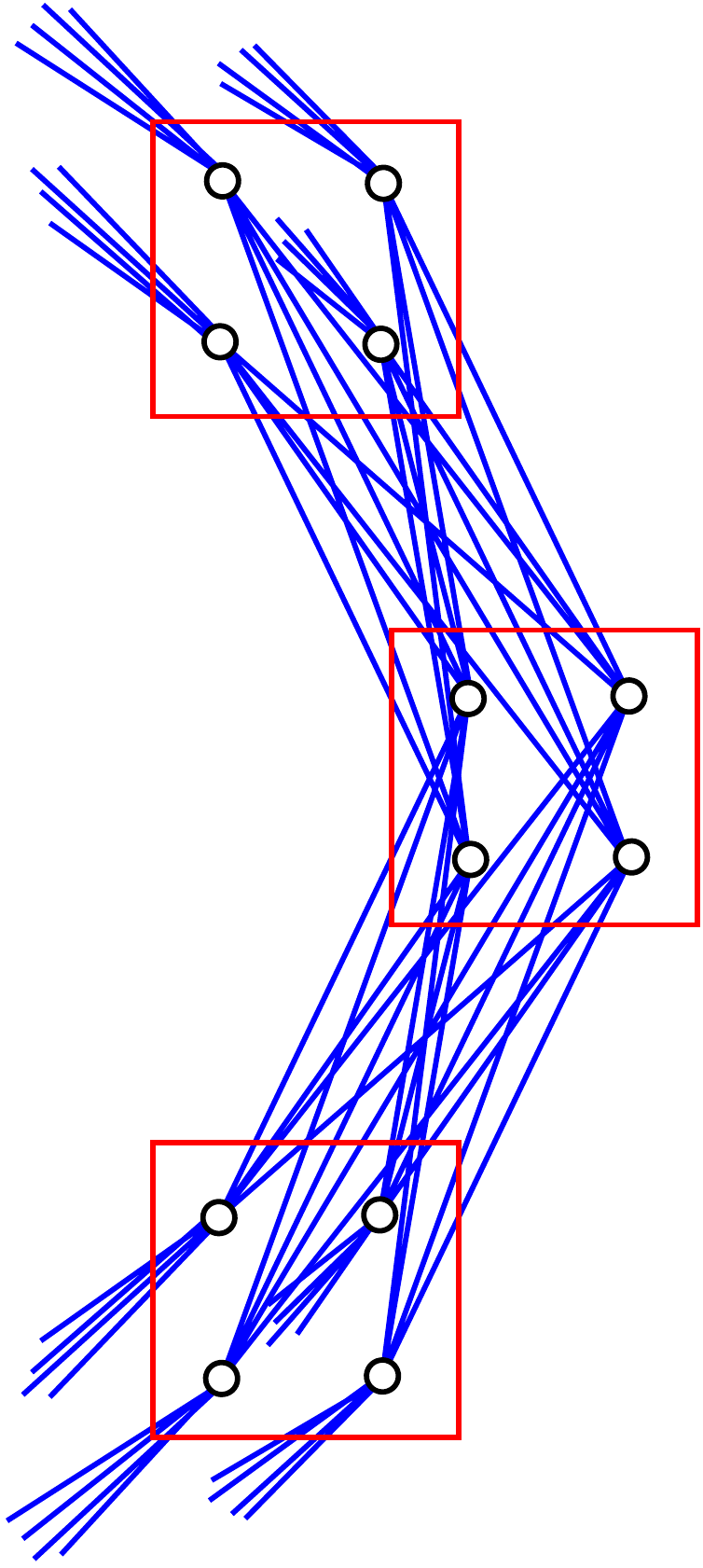, height=7cm}
}
\hspace{3cm}
\subfigure[Long edges]{\label{fig:long}
\epsfig{file=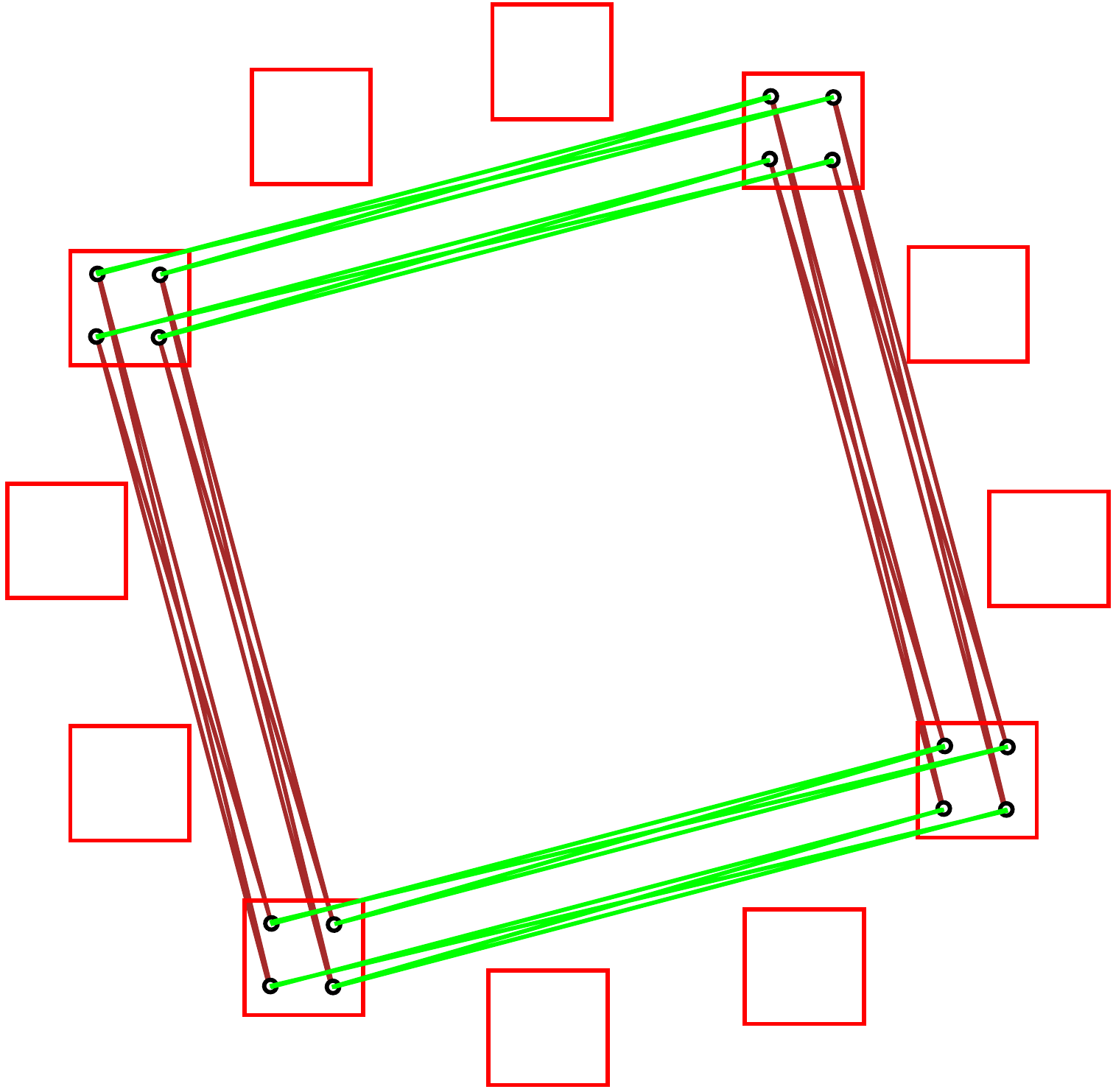,height=7cm}
}
\caption{Edges of $G_{2,12}$}
\end{center}
\end{figure}

\begin{claim}
$G_{n,m}$ is a vertex transitive graph. 
\end{claim}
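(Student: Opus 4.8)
The plan is to exhibit an explicit vertex-transitive group of automorphisms of $G_{n,m}$ acting transitively on the $n^2m$ vertices. The natural candidate is generated by three kinds of symmetries: a ``cluster rotation'' shifting the $\Z_m$ index, ``coordinate translations'' acting on the $\Z_n \times \Z_n$ labels within clusters, and a ``coordinate swap'' exchanging the two $\Z_n$ coordinates. First I would check that the map $R$ sending $i(a,b)$ to $(i+1)(a,b)$ is \emph{not} an automorphism by itself, because the edge rules distinguish even clusters from odd clusters (rules (2)--(3) versus (4)--(5)); instead one should look at $R^2$, which sends $i(a,b)\mapsto (i+2)(a,b)$ and preserves the even/odd parity of the cluster index, and separately at a reflection-type map that swaps parities while also swapping the coordinates $a\leftrightarrow b$, since rules (2) and (5) are interchanged with (3) and (4) precisely under $(a,b)\mapsto(b,a)$ together with $i\mapsto i\pm m/4$ shifts. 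So the key structural observation is: shifting $i$ by an \emph{odd} amount must be accompanied by the coordinate transposition $(a,b)\mapsto(b,a)$, while shifting by an even amount need not be.

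The main steps, in order, would be: (i) verify that $T_{(u,v)}: i(a,b)\mapsto i(a+u,b+v)$ is an automorphism for every $(u,v)\in\Z_n^2$ — this is immediate since all edge rules (1)--(5) depend on $a,c$ and $b,d$ only through equalities/inequalities $a=c$, $b=d$, $a\neq c$, $b\neq d$, which are translation-invariant, and short edges don't constrain the coordinates at all; (ii) verify that $R^2: i(a,b)\mapsto (i+2)(a,b)$ is an automorphism — parity of $i$ is preserved, $|i-j|=1$ is preserved, and the shifts by $\pm m/4$ commute with adding $2$; (iii) verify that the ``odd shift with swap'' map $S: i(a,b)\mapsto (i+1)(b,a)$ is an automorphism — here one checks that rule (2) for cluster $i$ (even, partner $i+m/4$) maps to rule (5)-type edges for cluster $i+1$ (odd, partner $(i+1)-m/4$), using that $(i+m/4)+1 = (i+1)+m/4$ but the parity flip forces us into rules (4)/(5), and the coordinate swap converts the ``$b\neq d$ on the $b$-coordinate'' condition into the ``$a\neq c$'' condition appropriately; and (iv) assemble transitivity: given any two vertices $i(a,b)$ and $j(c,d)$, first use a power of $R^2$ or $S$ (whichever matches the parity of $j-i$) to move cluster $i$ onto cluster $j$, then use some $T_{(u,v)}$ to fix up the coordinates — possibly after one application of the coordinate-swap automorphism $\Sigma: i(a,b)\mapsto i(b,a)$ if needed, though $\Sigma$ is actually the same as $S^2$ composed with $R^{-2}$, or can be checked directly to be an automorphism by symmetry of the edge rules under transposing the roles of the two coordinates simultaneously with swapping even$\leftrightarrow$odd... more carefully, $\Sigma$ alone swaps rules (2)$\leftrightarrow$(3) and (4)$\leftrightarrow$(5) but keeps parity, so it is \emph{not} an automorphism; rather the composite $S^2: i(a,b)\mapsto (i+2)(a,b)$ shows $S^2 = R^2$, and one simply notes that within the group generated by the $T$'s and $S$, one already acts transitively on coordinates because the $T$'s do.

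I expect the main obstacle to be step (iii): carefully tracking how the long-edge rules (2)--(5) transform under the parity-flipping map $S$ and confirming that the $\pm m/4$ shifts, the even/odd case split, and the coordinate swap all line up consistently — in particular one must check $S$ maps type-(2) edges to type-(4) or type-(5) edges and not to non-edges, and that the direction of the $m/4$ shift (i.e.\ $+m/4$ versus $-m/4$) is correctly matched after incrementing $i$. The cleanest way to present this is probably to tabulate, for each of the four long-edge types, the image cluster index and coordinate condition under $S$, and observe that the resulting four edges are exactly the four long-edge types attached to the image clusters. Everything else (translations, even shifts, the final transitivity argument) is routine once the edge rules have been written out.
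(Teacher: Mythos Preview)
Your approach is correct and is essentially the same as the paper's: the paper writes down, for each pair $i(a,b)$, $j(c,d)$, a single explicit automorphism $\phi$ sending one to the other, splitting on the parity of $j-i$ and applying the coordinate swap $(u,v)\mapsto(v,u)$ exactly in the odd case --- which is precisely your generators $T_{(u,v)}$ and $S$ composed into one formula. The paper then declares the edge-preservation check ``straightforward'' without writing it out, whereas you propose to verify it generator by generator; your route is slightly longer but makes the verification (especially your step~(iii), which you correctly identify as the crux) more transparent.
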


\begin{proof}[{\bf Proof}]

Let $i(a,b),j(c,d)$ be two vertices of the graph $G_{n,m}$. In order to show that $G_{n,m}$ is vertex transitive, we need to construct an automorphism $\phi:V\to V$ that preserves edges and satisfies 
$\phi(i(a,b)) = j(c,d)$. 
We consider two separate cases, depending on whether $j - i$ is even or odd. 
\newline
If $j-i$ is even, then we set 
\[
\phi(k(u,v)) = ((k+ j - i) \bmod m) ((u+c-a)\bmod n, (v+d-b)\bmod n).
\]
If $j-i$ is odd, then we set
\[
\phi(k(u,v)) = ((k+ j - i) \bmod m) ((v+c-b)\bmod n, (u+d-a)\bmod n).
\]
It is straightforward to check that $\phi$ is an automorphism that preserves edges. 
\end{proof}

\begin{lemma}\label{lem:maxhit}
Let $X$ be a simple random walk on $G_{2,12}$ which is either discrete or continuous. Then we have
\begin{align}\label{eq:discreteeq}
\max_{x,y}\estart{\tau_y}{x} = \estart{\tau_{6(1,1)}}{0(0,0)}.
\end{align}
\end{lemma}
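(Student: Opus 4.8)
The plan is to exploit vertex transitivity to reduce the computation of $\max_{x,y}\estart{\tau_y}{x}$ to finding, for a fixed start vertex, the target that is ``hardest to reach,'' and then to argue that $6(1,1)$ is that hardest target when we start at $0(0,0)$. By transitivity we may as well fix the starting vertex to be $0(0,0)$, so the claim becomes $\max_y \estart{\tau_y}{0(0,0)} = \estart{\tau_{6(1,1)}}{0(0,0)}$. The key structural observation is that $G_{2,12}$ projects onto the cycle $\Z_{12}$: the map $\psi(i(a,b)) = i$ sends short edges to edges of $\Z_{12}$ and long edges to edges of $\Z_{12}$ as well (since $m/4 = 3$, a long edge connects clusters $i$ and $i\pm 3$), so actually the projected walk is \emph{not} simple random walk on $\Z_{12}$ — one must be a little careful. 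The cleaner route is: the cluster index performs a random walk on $\Z_{12}$ that at each step moves by $0$ (laziness, for the lazy walk), $\pm 1$ (short edge), or $\pm 3$ (long edge), with probabilities determined by the vertex degree. Because of the $\pm 3$ jumps the natural ``farthest point'' intuition on $\Z_{12}$ would point to cluster $6$; within cluster $6$ one then wants the coordinates that are hardest to hit.

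The steps I would carry out, in order, are as follows. First, establish the projection: show that the degree of every vertex is the same (transitivity already gives this) and write down the one-step transition probabilities of the induced chain on cluster indices together with the conditional law of the coordinate pair; in particular note the parity-dependent nature of the long edges (even clusters link coordinate-$b$ across $+3$ and coordinate-$a$ across $-3$, odd clusters the reverse). Second, by a symmetry argument within the fiber over a cluster — the automorphisms fixing cluster $0$ and the start vertex, combined with reflections — reduce the set of candidate hardest targets in each cluster to at most a couple of coordinate orbits, and identify that the ``diagonal'' vertex (coordinates $(1,1)$ relative to the start's $(0,0)$, i.e. differing in both coordinates) is the hardest within its cluster. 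Third, compute or bound $\estart{\tau_y}{0(0,0)}$ as a function of the cluster of $y$: I would set up the standard system of linear equations for hitting times, using the projected chain to reduce the dimensionality, and show the maximum over clusters occurs at cluster $6$ (the antipode). For the discrete-time lazy walk and for the continuous-time walk the underlying structure is the same (continuous time is a deterministic time-change of the embedded chain, which has the same hitting-time ordering), so both cases follow from the same computation. Finally, combine: the overall maximum is attained at the hardest coordinate in the antipodal cluster, namely $6(1,1)$.

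The main obstacle I anticipate is step three: actually proving that cluster $6$ maximizes the hitting time rather than some intermediate cluster. The presence of the long $\pm 3$ edges means the cluster-index walk is not a nearest-neighbor walk on $\Z_{12}$, so the ``monotone in graph distance'' heuristic is not automatic; one genuinely has to solve (or carefully estimate) the hitting-time equations, perhaps by exploiting the reflection symmetry $i \mapsto -i$ of $\Z_{12}$ about the start, which forces the hitting-time profile to be symmetric and (one hopes) unimodal with peak at $6$. A secondary subtlety is that the long edges treat the two coordinates asymmetrically and differently on even versus odd clusters, so the fiber-symmetry reduction in step two needs the explicit automorphisms; I would use the automorphism $\phi$ constructed in the transitivity proof (with $j-i$ odd, which swaps the two coordinates) together with coordinate-wise reflections $(a,b)\mapsto(-a,b)$ etc. to cut the coordinate classes down to the two essentially different types — ``agrees with start in one coordinate'' versus ``differs in both'' — and then a direct comparison of their hitting times. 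Given the small size ($2^2\cdot 12 = 48$ vertices) one could in principle verify the final inequality by an explicit (if tedious) linear-algebra computation, which is presumably what the authors do.
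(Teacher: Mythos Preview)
Your broad plan (transitivity to fix the start, project to the cluster chain on $\Z_{12}$, solve a linear system) matches the paper, but you are missing the paper's central shortcut and you do not anticipate one genuine subtlety.

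The key observation you lack is this: starting from cluster $0$, at the first time the walk enters cluster $i$ with $i\neq 3,9$, its position inside cluster $i$ is \emph{uniform}. The reason is that any path from cluster $0$ to such a cluster either uses at least one short edge (which already randomizes the arriving coordinate pair) or uses two long edges in a row, and by the even/odd rule one of those randomizes the first coordinate and the other the second. This gives the clean decomposition
\[
\estart{\tau_{i(a,b)}}{0(0,0)} \;=\; h(i) + z \qquad (i\neq 3,9),
\]
where $h(i)$ is the hitting time of cluster $i$ for the projected chain and $z=\estart{\tau_{i(a,b)}}{U_i}$ is, by transitivity, a constant independent of $i,a,b$. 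In particular, within cluster $6$ all four vertices are \emph{equally} hard to hit from $0(0,0)$; your step two, trying to single out $(1,1)$ as strictly hardest via fiber symmetries, is both unnecessary and mis-aimed. More importantly, without this uniformity fact your step three is not justified: $\estart{\tau_y}{0(0,0)}$ is not a priori a function of the cluster of $y$ alone, so ``using the projected chain to reduce the dimensionality'' has no grounding until you prove the uniform-entry statement.

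The second thing you do not foresee is that clusters $3$ and $9$ are exceptional: they are joined to cluster $0$ by a single long edge, so the uniform-entry argument fails there and the decomposition $h(i)+z$ is unavailable. The paper handles these two clusters by a separate estimate, bounding $\estart{\tau_{3(a,b)}}{0(0,0)}$ via the first time $T$ the walk reaches cluster $3$ \emph{without} using the direct $0\to 3$ long edge (at time $T$ the position is again uniform), and then showing $\E{T}<h(6)=16$ by an explicit computation. With the uniformity insight the projected-chain system is only six equations (by the $i\mapsto -i$ symmetry you mention), and one reads off that $h(6)=16$ is maximal among $i\neq 3,9$.

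Your fallback of a brute-force linear-algebra verification on $48$ vertices would of course succeed, but that is not what the paper does and is not what your steps two and three actually describe.
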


%
%\begin{remark}\label{rem:continuous}
%\rm{
%Note that equation~\eqref{eq:discreteeq} holds true also when $X$ is a continuous time random walk on $G$.
%}
%\end{remark}

\begin{proof}[{\bf Proof}]

It suffices to prove the lemma for a discrete time random walk. Since $G_{2,12}$ is vertex transitive, it follows that for $x\in V$ we have
\[
\max_{x,y}\estart{\tau_y}{x} = \max_{y}\estart{\tau_y}{x}.
\]
So taking $x = 0(0,0)$, it suffices to show that for all $a,b \in \Z_2$ we have
\begin{align}\label{eq:maingoalproof}
\estart{\tau_{6(a,b)}}{0(0,0)} = \max_{y}\estart{\tau_y}{0(0,0)}.
\end{align}
First we observe that starting from any point in cluster 0, the first time the random walk hits cluster 6, the position is uniform.
Indeed, if we reach cluster 6 having used at least one short edge, then this is clear. If we use only long edges, then by the construction of the graph, with the first long edge we have randomized the column and with the second long edge we have randomized the row. 
Arguing similarly, if we start from cluster 0, the position at the first hitting time of cluster $i\neq 3,9$ is uniform.
Hence, if $T_i$ is the first time that we hit cluster $i\neq 3,9$, then 
\[
\estart{\tau_{i(a,b)}}{0(0,0)} = \estart{T_i}{0(0,0)} + \estart{\tau_{i(a,b)}}{U_i},
\]
where the last expectation means that we start from a uniform point in cluster $i$ and wait to hit $i(a,b)$. 
Since the graph is transitive, it follows that for all clusters $i$ and all $a,b$
\begin{align}\label{eq:constantexp}
\estart{\tau_{i(a,b)}}{U_i} =z.
\end{align}
We now define the process $Y$ to be the number of the cluster we are at. More precisely, $Y_t = i$ if and only if $X_t=i(a,b)$ for some $a,b$. It is easy to check that $Y$ is a Markov chain even with respect to the enlarged filtration which at time $t$ also contains the information about $X$ up to time $t$. The process $Y$ is a walk on $\Z_{12}$ with additional edges. 
From that it follows that for all $a,b$ we have
\[
\estart{T_i}{0(a,b)} = h(i) = \E{0\to i}
\]
and $h(i)$ satisfies a system of 
$6$ (by symmetry) linear equations, with solution given by
\begin{align}\label{eq:solution}
\nonumber&h(6) = 16, \ h(5) = h(7)= 16, \ h(4) = h(8)= 15, \\ &h(3)=h(9)= 13, \ h(2) = h(10) = 13, \ h(1) = h(11) = 10.
\end{align}
%\begin{align*}
%& h(0) = 1 + \frac{2}{3} h(1) + \frac{1}{3} h(3) \\
%& h(1) = 1 + \frac{1}{3} h(0) + \frac{1}{2}h(2) + \frac{1}{6} h(4) \\
%& h(2) = 1 + \frac{1}{2}h(1) + \frac{1}{3}h(3) + \frac{1}{6}h(5) \\
%& h(3) = 1 + \frac{1}{3} h(2) + \frac{1}{3} h(4) + \frac{1}{3} h(0) \\
%& h(4) = 1 + \frac{1}{3}h(3) +\frac{1}{2} h(5) + \frac{1}{6} h(1) \\
%& h(5) = 1+ \frac{1}{2} h(4) + \frac{1}{6} h(2).
%\end{align*}
%Solving the system we get

Putting everything together, we deduce that for all $i\neq 3,9$
\begin{align}\label{eq:hit6}
\estart{\tau_{i(a,b)}}{0(0,0)} = \E{0\to i} + \estart{\tau_{i(a,b)}}{U_i} = h(i) + z.
\end{align}
From~\eqref{eq:solution}, we obtain that
\begin{align}\label{eq:hardtarget}
\estart{\tau_{6(1,1)}}{0(0,0)} = \max_{\substack{i \neq 3,9 \\ a,b \in \Z_2}} \estart{\tau_{i(a,b)}}{0(0,0)}
\end{align}
and it remains to show that
\begin{align}\label{eq:finaltarget}
\estart{\tau_{6(1,1)}}{0(0,0)} \geq \max_{\substack{i=3,9 \\ a,b \in \Z_2}} \estart{\tau_{i(a,b)}}{0(0,0)}. 
\end{align}
Let $T$ be the first time that we hit cluster 3 without using the long edge $0\to 3$ directly. It then follows that at time $T$ the position in cluster $3$ is uniform. Hence we have
\[
\estart{\tau_{3(a,b)}}{0(0,0)} \leq \E{T} + \estart{\tau_{3(a,b)}}{U_3} = \E{T} + z.
\]
In view of~\eqref{eq:hit6} it thus suffices to show 
\begin{align}\label{eq:final}
\E{T} < \E{0\to 6} = 16.
\end{align}
Let $X$ be the first time that the walk is off the ``shuttle'' $0\to 3$. Then $X$ has the geometric distribution 
$\pr{X= i} = q p^{i-1}$ with $p = 1/6$ and $q=1-p = 5/6$. 
We can now write
\begin{align*}
\E{T} = 1 + \sum_{i=1,3,\ldots} \pr{X = i} A_1 + \sum_{i=2,4,\ldots} \pr{X= i} A_2,
\end{align*}
where $A_1$ and $A_2$ are given by 
\begin{align*}
& A_1 = \frac{2}{5} \E{1\to 3} + \frac{2}{5} \E{11\to 3} + \frac{1}{5} \E{9 \to 3} = \frac{72}{5} \\
& A_2 = \frac{2}{5} \E{4 \to 3} + \frac{2}{5} \E{2\to 3} + \frac{1}{5} \E{6\to 3} = \frac{53}{5}.
\end{align*}
Substituting  we deduce
\[
\E{T} = \frac{104}{7} < 16,
\]
and hence this concludes the proof of the lemma.
\end{proof}

\begin{proof}[{\bf Proof of Theorem~\ref{thm:example}} (for lazy walk)]
From Lemma~\ref{lem:maxhit} we have that the pair that maximizes $\estart{\tau_y}{x}$ is $x=0(0,0)$ and $y=6(1,1)$. (Lemma~\ref{lem:maxhit} is stated for a non-lazy walk, but the hitting times of the non-lazy and lazy walk are equal up to a factor of $2$.) 
We will now prove that if the moving target stays at position $5(1,1)$ for $2$ time steps and then moves to $6(1,1)$, then the expected hitting time is larger than $\estart{6(1,1)}{0(0,0)}$. 

We write $\tau_{5\to 6}$ for the time to hit the moving target. Then notice that $\tau_{5\to 6} - \tau_{6(1,1)}$ is non-zero if we hit $6$ at time $1$ or $2$.  We thus have
\begin{align*}
\estart{\tau_{5\to 6} - \tau_{6(1,1)}}{0(0,0)} \geq \prstart{\tau_{6(1,1)} \leq 2}{0(0,0)} \geq c >0
\end{align*}
and this concludes the proof of the theorem for a lazy walk.
\end{proof}

\begin{proof}[{\bf Proof of Theorem~\ref{thm:example}} (for continuous time walk)]

Consider the graph $G_{7,20}$. Solving the system of expected hitting times and arguing in exactly the same way as in the proof of Lemma~\ref{lem:maxhit} we get that
\[
\estart{\tau_{10(1,1)}}{0(0,0)} = \max_{x,y} \estart{\tau_y}{x}.
\]

We now describe a strategy for the moving particle that achieves bigger expected hitting time. Suppose that $f(t) = 8(1,1)$ when $t\leq \epsilon$ and $f(t) = 10(1,1)$ for $t>\epsilon$, where $
\epsilon>0$ will be determined. 

Note that $\tau_{f} - \tau_{10(1,1)}$ is nonzero if and only if $\tau_{10(1,1)}<\epsilon$ or $\tau_{f} <\epsilon$. To simplify notation we write $0$ instead of $0(0,0)$ and $\tau_{10}$ instead of $\tau_{10(1,1)}$.
We now have
\begin{align}\label{eq:decomp}
\nonumber\estart{\tau_f - \tau_{10}}{0} &= \estart{(\tau_f - \tau_{10})\1(\tau_f <\epsilon \ \text{ or } \ \tau_{10}<\epsilon)}{0} 
\\&=  \estart{(\tau_f - \tau_{10})\1(\tau_f <\epsilon)}{0} + \estart{(\tau_f - \tau_{10})\1(\tau_{10} <\epsilon, \tau_f>\epsilon)}{0}. 
\end{align}
We look at each of these two terms separately. For the first one we get
\begin{align}\label{eq:firstterm}
\estart{(\tau_f - \tau_{10})\1(\tau_f <\epsilon)}{0}  \geq \escond{\tau_f - \tau_{10}}{\tau_f <\epsilon, \tau_f <\tau_{10}}{0} \prstart{\tau_f <\epsilon, \tau_f <\tau_{10}}{0}. 
\end{align}
%Clearly
%\[
%\prstart{\tau_f<\epsilon, \tau_{10}>\tau_f}{0} \leq \prstart{\tau_f<\epsilon}{0}
%\]
By the definition of $\tau_f$ we have $\{\tau_f < \epsilon\} = \{ \tau_{8(1,1)}<\epsilon\}$. We now describe an equivalent way of viewing the continuous time chain. To every edge adjacent to a vertex $x$ we assign an exponential clock of parameter $1/d(x)$, where $d(x)$ is the degree of $x$. 
Then the Markov chain crosses the edge of the first 
exponential clock that rings. In order to hit $8(1,1)$ before time $\epsilon$ at least four exponential clocks of a constant parameter should have rung. Thus 
\[
\prstart{\tau_f<\epsilon, \tau_{10}>\tau_f}{0}  \leq c \epsilon^4.
\]
It is easy to see that there exists a constant $c'$ independent of $\epsilon$ so that 
\[
\escond{\tau_{10}-\tau_f}{\tau_f<\epsilon,\tau_{10}>\tau_f}{0} \leq c'.
\]
Indeed, this expectation can be bounded from above by the commute time between $8(1,1)$ and~$10(1,1)$ which is at most twice the distance between $8(1,1)$ and $10(1,1)$ times the total number of edges of $G_{7,20}$.
%since once we hit $8(1,1)$ then we wait for the first hitting time of $10(1,1)$, and this can be calculated on the graph $G_{7,20}$ by solving linear equations. 
Therefore plugging these estimates in~\eqref{eq:firstterm} we obtain for a positive constant $c_1$
\begin{align}\label{eq:firsttermbound}
\estart{(\tau_f - \tau_{10})\1(\tau_f <\epsilon)}{0} \geq -c_1 \epsilon^4.
\end{align}
For the second term of~\eqref{eq:decomp} we have
\begin{align*}
\estart{(\tau_f - \tau_{10})\1(\tau_{10} <\epsilon, \tau_f >\epsilon)}{0} \geq  \estart{(\tau_f - \tau_{10})\1(\tau_{10} <\epsilon/2, \tau_f >\epsilon)}{0} \geq \frac{\epsilon}{2} \prstart{\tau_{10}<\epsilon/2, \tau_f >\epsilon}{0}
\end{align*}
and arguing as above we obtain
\[
\prstart{\tau_{10}<\epsilon/2, \tau_f >\epsilon}{0} \asymp \epsilon^2.
\]
Putting all these estimates together we deduce
\[
\estart{\tau_f - \tau_{10}}{0} \geq c_2 \epsilon^3 - c_1\epsilon^4, 
\]
which can be made strictly positive by choosing $\epsilon>0$ sufficiently small and this completes the proof of the theorem.

\end{proof}

\section*{Acknowledgements}
We are grateful to Guy Kindler for proposing the question that led to this work. 
We thank Daniel Ahlberg, Almut Burchard, Yuval Peres, Richard Pymar and Alexandre Stauffer for useful discussions. We also thank MSRI, Berkeley, for its hospitality.

\bibliographystyle{plain}
\bibliography{biblio}

\end{document}